\let\origsection=\section \def\section{\@ifstar{\origsection*}{\mysection}} 
\def\mysection{\@startsection{section}{1}\z@{.7\linespacing\@plus\linespacing}{.5\linespacing}{\normalfont\scshape\centering\S}}
\renewcommand{\PrintDOI}[1]{\doi{#1}}
\numberwithin{equation}{section}
\def\rmlabel{\upshape({\itshape \roman*\,})}
\def\alabel{\upshape({\itshape \alph*\,})}
\let\polishlcross=\l
\def\l{\ifmmode\ell\else\polishlcross\fi}
\def\qand{\quad\text{and}\quad}
\let\emptyset=\varnothing
\let\setminus=\smallsetminus
\def\moverlay{\mathpalette\mov@rlay}
\def\mov@rlay#1#2{\leavevmode\vtop{   \baselineskip\z@skip \lineskiplimit-\maxdimen
   \ialign{\hfil$\m@th#1##$\hfil\cr#2\crcr}}}
\newcommand{\charfusion}[3][\mathord]{
    #1{\ifx#1\mathop\vphantom{#2}\fi
        \mathpalette\mov@rlay{#2\cr#3}
      }
    \ifx#1\mathop\expandafter\displaylimits\fi}
\DeclareFontFamily{U}  {MnSymbolC}{}
\DeclareSymbolFont{MnSyC}         {U}  {MnSymbolC}{m}{n}
\DeclareFontShape{U}{MnSymbolC}{m}{n}{
    <-6>  MnSymbolC5
   <6-7>  MnSymbolC6
   <7-8>  MnSymbolC7
   <8-9>  MnSymbolC8
   <9-10> MnSymbolC9
  <10-12> MnSymbolC10
  <12->   MnSymbolC12}{}
\DeclareMathSymbol{\powerset}{\mathord}{MnSyC}{180}
\def\namedlabel#1#2{\begingroup
    #2\def\@currentlabel{#2}\phantomsection\label{#1}\endgroup
}
\newtheorem{theorem}             {Theorem}[section]
\newtheorem{lemma}     	[theorem] {Lemma}        
\newtheorem{conjecture}	[theorem] {Conjecture}
\newtheorem{proposition}[theorem] {Proposition}   
\newtheorem{corollary}	[theorem] {Corollary}
\newtheorem{claim}	[theorem] {Claim}
\let\eps=\varepsilon
\let\theta=\vartheta
\let\rho=\varrho
\let\phi=\varphi
\def\NN{\mathds N}
\def\ZZ{\mathds Z} 
\def\RR{\mathds R}
\def\cI{\mathcal I}
\begin{document}

\title{On the local density problem for graphs of given odd-girth}

\author[W. Bedenknecht]{Wiebke Bedenknecht}
\address{Fachbereich Mathematik, Universit\"at Hamburg, Hamburg, Germany}
\email{Wiebke.Bedenknecht@uni-hamburg.de}
\email{Christian.Reiher@uni-hamburg.de}
\email{schacht@math.uni-hamburg.de}

\author[G. O. Mota]{Guilherme Oliveira Mota}
\address{Instituto de Matem\'atica e Estat\'{\i}stica, Universidade de
   S\~ao Paulo, S\~ao Paulo, Brazil}
\email{mota@ime.usp.br}

\author[Chr. Reiher]{Christian Reiher}
\author[M. Schacht]{Mathias Schacht}

\thanks{The second author was supported by FAPESP (Proc.~2013/11431-2, 2013/20733-2) and partially by CNPq (Proc.~459335/2014-6). The collaboration of the authors was supported by CAPES/DAAD PROBRAL (Proc. 430/15)
and by FAPESP (Proc.~2013/03447-6).}

\begin{abstract}
Erd\H{o}s conjectured that every $n$-vertex triangle-free graph contains a subset of $\lfloor n/2\rfloor$ vertices that spans at 
most $n^2/50$ edges. Extending a recent result of Norin and Yepremyan, we confirm this conjecture for graphs homomorphic to so-called Andr\'asfai graphs. As a consequence, Erd\H{o}s' conjecture holds for every triangle-free graph $G$ with minimum degree $\delta (G)>10n/29$ and if $\chi (G)\leq 3$ the degree condition can be relaxed to \mbox{$\delta (G)> n/3$}. In fact, we obtain a more general result for graphs of higher odd-girth.
\end{abstract}

\maketitle

\section{Introduction}

 We say an $n$-vertex graph $G$ is \emph{$(\alpha,\beta)$-dense} if every 
subset of $\lfloor \alpha n\rfloor$ vertices spans more than $\beta n^2$ edges. Given $\alpha\in (0,1]$, Erd\H{o}s, Faudree, Rousseau, and Schelp~\cite{ErFaRoSc94} asked for the 
minimum $\beta=\beta(\alpha)$ such that every $(\alpha,\beta)$-dense graph contains a triangle. For example, Mantel's theorem asserts that $\beta (1)=1/4$. More generally, Erd\H os et al.\ 
conjectured that for  $\alpha\geq 17/30$ the balanced complete bipartite graph 
gives the best lower bound for the function  $\beta(\alpha)$, which leads to
\begin{equation}\label{eq:1730}
	\beta (\alpha)=\frac{1}{4}(2\alpha -1)\,.
\end{equation}
 The same authors verified this conjecture for $\alpha \geq 0.648$ and the best result in this direction 
 is due to Krivelevich~\cite{Kr95}, who verified it
for every $\alpha \geq 3/5$.

We say a graph $G$ is a \emph{blow-up} of some graph~$F$, if 
there exists a graph homomorphism $\phi\colon G\to F$ with the property $\{x,y\}\in E(G)$ if and only if $\{\varphi(x),\varphi(y)\}\in E(F)$. Moreover, a blow-up is \emph{balanced} if the preimages~$\phi^{-1}(v)$ of all vertices $v\in V(F)$ 
have the same size.
For $\alpha<17/30$ balanced blow-ups
   of the 5-cycle yield a better lower bound
for $\beta(\alpha)$ and Erd\H os et al.\ conjectured
\begin{equation}\label{eq:53120}
	\beta (\alpha)=\frac{1}{25}(5\alpha -2)
\end{equation}
for $\alpha\in[53/120,17/30]$. For $\alpha<53/120$ it is known 
that balanced blow-ups of the Andr\'asfai graph $F_3$ (see Figure~\ref{figure:Fd}) lead to a better bound.
The special case $\beta(1/2)=1/50$
was considered before by Erd\H{o}s~\cite{Er76a} 
(see also~\cite{Er97a} for a monetary bounty for this problem).

\begin{conjecture}[Erd\H os]\label{conjecture:erdos}
Every $(1/2,1/50)$-dense graph contains a triangle.
\end{conjecture}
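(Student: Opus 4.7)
\emph{Plan.} The conjecture is equivalent to the statement that every $n$-vertex triangle-free graph $G$ contains $\lfloor n/2\rfloor$ vertices spanning at most $n^2/50$ edges. Since the extremal configurations are balanced blow-ups of $C_5$, a natural strategy is to reduce $G$ to a blow-up of some small triangle-free graph --- most prominently, to a blow-up of an Andr\'asfai graph $F_k$ --- and then solve a quadratic optimization on that blow-up. I would organize the argument around a dichotomy on the minimum degree of $G$, combined with an induction on $n$.

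\emph{Degree dichotomy.} Fix a threshold $\alpha$ a little above $1/3$ (the abstract suggests $\alpha=10/29$ is the right value). If $\delta(G)>\alpha n$, then the Andr\'asfai--Erd\H os--S\'os theorem together with the sharpenings by Thomassen and Jin forces $G$ to admit a homomorphism into some Andr\'asfai graph $F_k$ with $k$ bounded in terms of~$\alpha$, so the problem collapses to a question about blow-ups of~$F_k$. Conversely, if $\delta(G)\le\alpha n$, pick a vertex $v$ of minimum degree; the non-neighborhood $W=V(G)\setminus(N(v)\cup\{v\})$ has at least $(1-\alpha)n$ vertices and inherits triangle-freeness, while $N(v)$ is itself an independent set. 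Combining a carefully chosen subset of $W$ with some vertices of $N(v)$ --- whose induced subgraph contributes no edges --- together with induction on the triangle-free graph $G[W]$ should deliver a subset of $\lfloor n/2\rfloor$ vertices well below the $n^2/50$ quota.

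\emph{Blow-up optimization and the main obstacle.} Given a homomorphism $\phi\colon G\to F_k$, write $V(G)=V_1\dcup\dots\dcup V_m$ with $V_i=\phi^{-1}(u_i)$ and $|V_i|=n_i$, so each $V_i$ is independent and edges of $G$ can only connect $V_i$ to $V_j$ when $u_iu_j\in E(F_k)$. Choosing $s_i\le n_i$ vertices from each $V_i$ yields a set of size $\sum s_i$ spanning at most $\sum_{u_iu_j\in E(F_k)} s_is_j$ edges, so the task reduces to showing that for every Andr\'asfai graph $F_k$ and every size profile $(n_i)$ with $\sum n_i=n$ one can find integers $(s_i)$ with $\sum s_i=\lfloor n/2\rfloor$ and $\sum_{u_iu_j\in E(F_k)} s_is_j\le n^2/50$, with near-equality only for balanced blow-ups of $F_2=C_5$. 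This is a family of quadratic programs indexed by $k$, and the principal obstacle is its unbounded dimension: a uniform bound demands a structural certificate --- either an LP-duality argument exploiting the circulant symmetry of $F_k$, or a recursive reduction from $F_k$ back to $F_{k-1}$ --- that consistently traces the optimum back to the $C_5$ configuration. Handling this blow-up optimization for all $k$ (extending the $k=3$ case of Norin and Yepremyan) is exactly where the technical heart of the argument must lie, while the regime $n/3<\delta(G)\le\alpha n$ is the one for which no clean reduction is currently known.
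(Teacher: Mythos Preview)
The statement is Erd\H{o}s' conjecture, which the paper does \emph{not} prove; it remains open. The paper establishes only the special case of graphs homomorphic to some Andr\'asfai graph~$F_d$ (Theorem~\ref{thm:main}), and via Jin's theorem this yields the conjecture under the extra hypothesis $\delta(G)>10n/29$. Your plan correctly isolates the blow-up optimisation as a core ingredient, and the paper does carry it out for every~$d$, though not by LP-duality or by a recursion $F_d\to F_{d-1}$ as you suggest: instead the vertices of the blow-up are placed on $\RR/\ZZ$ so that adjacency becomes equivalent to the central angle exceeding $\tfrac{k-1}{2k-1}\cdot 360^\circ$ (Lemma~\ref{lemma:Fkd}), and a sparse half is located by an averaging argument over half-arcs together with a case split on whether $\alpha(G)\gtrless \tfrac{k}{2k+1}n$ (Propositions~\ref{thm:Fd1} and~\ref{thm:general}).

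The genuine gap is your low-degree branch. With $\delta(G)\le\alpha n$ and $v$ of minimum degree, induction on $G[W]$ for $W=V(G)\setminus(N(v)\cup\{v\})$ gives a set $S\subseteq W$ of size $\lfloor |W|/2\rfloor<\lfloor n/2\rfloor$ with $e(S)\le |W|^2/50$; to reach size $\lfloor n/2\rfloor$ you propose adding vertices from the independent set $N(v)$, but each such vertex may have up to $|S|$ neighbours in~$S$, and nothing in your outline controls those cross-edges. No averaging over choices in $N(v)$ rescues this, and if the step worked as stated the full conjecture would follow --- which neither this paper nor anyone else has proved. Your closing sentence essentially concedes the difficulty: once $\delta(G)\le 10n/29$ the graph need not be homomorphic to any Andr\'asfai graph, and that is precisely the regime in which the conjecture remains open.
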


Currently, the best known upper bound on $\beta(1/2)$ is $1/36$ and was obtained by Krivelevich~\cite{Kr95}.
Besides the balanced blow-up of the $5$-cycle, Simonovits (see, e.g.,~\cite{Er97a}) noted that balanced blow-ups of the 
Petersen graph yield the same lower bound for Conjecture~\ref{conjecture:erdos} and, more generally, for~\eqref{eq:53120} in the corresponding range. 

Conjecture~\ref{conjecture:erdos} asserts that every triangle-free $n$-vertex graph~$G$
contains a subset of~$\lfloor n/2\rfloor$ vertices
that spans at most $n^2/50$ edges. Our first result (see Theorem~\ref{thm:main} below) 
verifies this for graphs~$G$ that are homomorpic to a triangle-free graph from a special class.

\subsection{Andr\'asfai graphs}
A well studied family of triangle-free graphs, which appear in the lower bound constructions for the function $\beta(\alpha)$ above, are the so-called \emph{Andr\'asfai graphs}. Those graphs already appeared in the work of Erd\H os~\cite{Er57} and Andr\'asfai~\cites{A62,A64}. Andr\'asfai graphs are contained in several interesting graph classes. For example, they are special 
circulant graphs, they appear as finite subgraphs of 2-dimensional Borsuk graphs, and can be described as the  complement of an appropriate power of a cycle. For our purposes it is convenient to define them as follows.

 For an integer $d\geq 1$
the Andr\'asfai graph $F_d$ is the $d$-regular 
graph with vertex set
\[
	V(F_d)=\{v_1,\ldots,v_{3d-1}\}\,,
\] 
where $\{v_i,v_j\}$ forms an edge if
\begin{equation}\label{eq:def:Fd}
	d\leq |i-j| \leq 2d-1\,.
\end{equation}
Note that $F_1=K_2$ and $F_2=C_5$ (see Figure~\ref{figure:Fd}).
It is easy to check that Andr\'asfai graphs are 
triangle-free and balanced blow-ups of these graphs play a prominent r\^ole 
in connection with extremal problems for triangle-free graphs 
(see, e.g.,~\cites{Er57,A62,A64,AnErSo74,Ha82,Ji95,ChJiKo97,W73}). 

\begin{figure}[ht]
\begin{tikzpicture}[scale=0.93]
	\draw[opacity=0] (-2.6,-2.6) rectangle (2.6,2.6);
\foreach \x in {1,2,...,5}{
 		\fill (162-\x*72:2) circle (2.8pt);
 		\node at (162-\x*72:2.4) {$v_\x$};
		\draw[thick] (162-\x*72:2) -- (18-\x*72:2);
	}
\end{tikzpicture}	
\hfill
\begin{tikzpicture}[scale=0.93]
	\draw[opacity=0] (-2.6,-2.6) rectangle (2.6,2.6);
\foreach \x in {1,2,...,8}{
 		\fill (135-\x*45:2) circle (2.8pt);
		\node at (135-\x*45:2.4) {$v_\x$};
		\draw[thick] (135-\x*45:2) -- (-\x*45:2);
	} 
	\foreach \x in {1,2,3,4}
		\draw[thick] (135-\x*45:2) -- (-45-\x*45:2);
\end{tikzpicture}
\hfill
\begin{tikzpicture}[scale=0.93]
	\draw[opacity=0] (-2.6,-2.6) rectangle (2.6,2.6);
\foreach \x in {1,2,...,11}{
		\fill (122.727-\x*32.727:2) circle (2.8pt);
		\node at (122.727-\x*32.727:2.4) {$v_{\x}$};
		\draw[thick] (122.727-\x*32.727:2) -- (-8.181-\x*32.727:2);
		\draw[thick] (122.727-\x*32.727:2) -- (-40.908-\x*32.727:2);
	}
\end{tikzpicture}
\caption{Andr\'asfai graphs $F_2$, $F_3$, and $F_4$.}
\label{figure:Fd}
\end{figure}
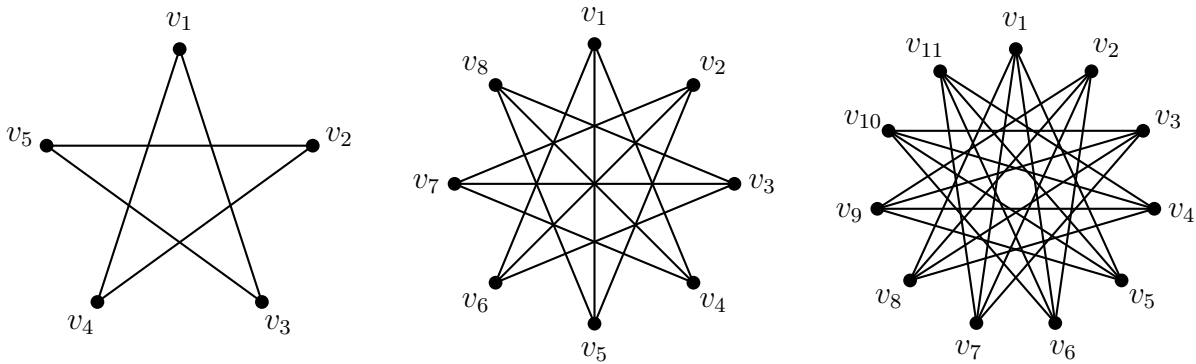

Our first result validates Conjecture~\ref{conjecture:erdos}
(stated in the contrapositive) for graphs homomorphic to some Andr\'asfai graph. 
\begin{theorem}\label{thm:main}
If  a graph $G$ is homomorphic to an Andr\'asfai graph $F_d$ for some integer~\mbox{$d\geq 1$}, then $G$ is not $(1/2,1/50)$-dense.
\end{theorem}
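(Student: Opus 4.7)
The plan is to reduce to the case where $G$ is the complete blow-up of $F_d$: setting $V_i := \phi^{-1}(v_i)$ and $n_i := |V_i|$ with $\sum n_i = n$, one may replace $G$ by the graph in which every possible edge between $V_i$ and $V_j$ is present whenever $\{v_i,v_j\} \in E(F_d)$, since this only increases the number of edges inside any candidate subset $W$. After this reduction, the task becomes: for any nonnegative integers $n_1,\ldots,n_{3d-1}$ with $\sum n_i = n$, find $w_i \in \{0,1,\ldots,n_i\}$ with $\sum w_i = \lfloor n/2 \rfloor$ and $\sum_{\{i,j\}\in E(F_d)} w_i w_j \leq n^2/50$. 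I index cyclically modulo $3d-1$, using that $F_d$ is isomorphic to the circulant graph on $\ZZ/(3d-1)$ whose edges are the pairs of cyclic distance in $\{d,d+1,\ldots,2d-1\}$; in particular any $d$ consecutive classes form an independent set in $F_d$.

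Setting $A_k = \{k,k+1,\ldots,k+d-1\}$ and $S_k = \sum_{i\in A_k} n_i$, the strategy is as follows. If some $S_k \geq \lfloor n/2 \rfloor$, then $\bigcup_{i\in A_k} V_i$ contains an independent set of size $\lfloor n/2 \rfloor$ and we are done. Otherwise, for each $k$ I extend the independent union by $r_k := \lfloor n/2 \rfloor - S_k$ vertices drawn from a single neighboring class $V_{k-1}$ or $V_{k+d}$. Within $F_d$ one checks that $V_{k-1}$ is adjacent to exactly one element of $A_k$ — namely $V_{k+d-1}$ — because the cyclic distance from $k-1$ to the other elements of $A_k$ is strictly less than $d$, and symmetrically $V_{k+d}$ is adjacent only to $V_k$. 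Hence the cheaper of the two single-sided extensions contributes exactly $r_k \min(n_k,n_{k+d-1})$ edges.

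It thus suffices to exhibit a $k$ for which this product is at most $n^2/50$. The natural approach is to average over $k \in \ZZ/(3d-1)$: bounding $\min(a,b) \leq (a+b)/2$ and expanding the resulting sum gives
\[
\frac{1}{3d-1}\sum_k r_k\min(n_k,n_{k+d-1}) \;\leq\; \frac{1}{3d-1}\biggl(\frac{n^2}{2}-\sum_{s=0}^{d-1}P_s\biggr),
\qquad P_s := \sum_k n_k n_{k+s},
\]
so the desired bound reduces to the quadratic inequality $\sum_{s=0}^{d-1} P_s \geq (26-3d)n^2/50$. For $d\geq 9$ the right-hand side is nonpositive and the inequality is trivial. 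For $d=2$ it reduces to $P_0+P_1 \geq 2n^2/5$, which follows from $\sum_k(n_k+n_{k+1})^2 \geq 4n^2/5$ by Cauchy--Schwarz, with equality at the balanced $F_2$ blow-up — this matches the extremal configuration for $\beta(1/2)=1/50$ and explains why the constant $1/50$ is sharp. For the intermediate range $d \in \{3,\ldots,8\}$ a single window-sum Cauchy--Schwarz estimate has the correct shape but suboptimal constants, and one must combine several such inequalities together with the elementary identities $P_0 \geq P_s$ (a consequence of $\sum_k (n_k-n_{k+s})^2 \geq 0$). The main technical obstacle is precisely this intermediate range, together with handling the degenerate case in which no $k$ satisfies $n_{k-1} \geq r_k$ or $n_{k+d} \geq r_k$ (so that the single-sided extension becomes non-executable); such unbalanced configurations require extending through several classes and tracking the additional edges they produce, which should be absorbable into a refined averaging argument or dispatched by a direct analysis when some class $V_i$ is itself large enough to serve as a base.
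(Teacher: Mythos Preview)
Your reduction to complete blow-ups and the averaging identity
\[
\frac{1}{3d-1}\sum_k r_k\,\tfrac{n_k+n_{k+d-1}}{2}=\frac{1}{3d-1}\Bigl(\frac{n^2}{2}-\sum_{s=0}^{d-1}P_s\Bigr)
\]
are both correct, and the resulting inequality $\sum_{s=0}^{d-1}P_s\ge(26-3d)n^2/50$ is indeed what one would need. However, the proposal is not a proof: you yourself flag two obstacles, and both are genuine. First, for $d\in\{4,\dots,8\}$ the quadratic form $\sum_{s<d}P_s$ is \emph{not} positive semidefinite on the hyperplane $\sum n_i=n$ (for $d=4$ the circulant eigenvalue at $j=2$ is already negative), so no linear combination of window Cauchy--Schwarz bounds and the relations $P_0\ge P_s$ will give the inequality unconditionally; one would have to exploit the constraint $S_k<n/2$, and it is not clear how. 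Second, the feasibility issue is more serious than ``absorbable into a refined averaging'': the minimum in $r_k\min(n_k,n_{k+d-1})$ selects a \emph{side}, and there is no reason the class on that side contains $r_k$ vertices. Extending through several classes introduces new cross-edges (e.g.\ between $V_{k-1}$ and $V_{k+d}$, which \emph{are} adjacent in $F_d$), and these are not bounded by the same average.

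The paper takes a completely different route that sidesteps both problems and works uniformly in $d$ (indeed, it proves the stronger Theorem~\ref{thm:main2} for all odd-girths). It places $V$ on the circle $\RR/\ZZ$ so that edges correspond to angular distance exceeding $\tfrac{k-1}{2k-1}$, and then splits on whether $\alpha(G)<\tfrac{k}{2k+1}n$ or not. In the small-$\alpha$ case it proves, for \emph{every} arc $[\xi,z_\xi]$ of $n/2$ vertices, a pointwise inequality (Claim~\ref{cl:1a}) saying that a certain signed sum over the ``left portion'' $V_\xi$ is negative; it then chains these arcs around the circle via the recursion $\xi\mapsto z_\xi-\tfrac{k-1}{2k-1}$ until they close up after some number of windings, which contradicts a global positivity coming from $\alpha(G)<\tfrac{k}{2k+1}n$. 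In the large-$\alpha$ case it locates a maximum independent arc, inscribes a regular $(2k-1)$-gon, and derives a short chain of inequalities from Lemma~\ref{lem:useful} that directly contradicts $\alpha(G)\ge\tfrac{k}{2k+1}n$. Neither branch requires any case analysis in $d$, and the ``feasibility'' of the half-set is automatic because one always takes a contiguous arc of $\lfloor n/2\rfloor$ vertices rather than patching an independent window with an external class.
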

Since $F_d$ is homomorphic to $F_{d'}$ if and only if $d'\geq d$, 
Theorem~\ref{thm:main} extends recent work of Norin and Yepremyan~\cite{NoYe15}, who obtained such a result
for $n$-vertex graphs~$G$ homomorphic to~$F_5$ with the additional minimum degree assumption $\delta(G)\geq 5n/14$.

Owing to the work of Chen, Jin, and Koh~\cite{ChJiKo97}, which asserts that every triangle-free 
$3$-chromatic $n$-vertex graph $G$ with minimum degree $\delta(G)>n/3$ is homomorphic to some Andr\'asfai graph,
we deduce from Theorem~\ref{thm:main} that Conjecture~\ref{conjecture:erdos} holds for all such graphs~$G$.
Similarly, combining Theorem~\ref{thm:main} with a result of Jin~\cite{Ji95}, which
asserts that triangle-free graphs~$G$ with 
$\delta(G)>10n/29$ are homomorphic 
to~$F_9$, implies Conjecture~\ref{conjecture:erdos} for those graphs as well. 
We summarise these direct consequences of Theorem~\ref{thm:main} in the following corollary.
\begin{corollary}
	Let $G$ be a triangle-free graph on $n$ vertices.
	\begin{enumerate}[label=\alabel]
		\item\label{it:cora} If $\delta(G)> 10n/29$, then $G$ is not $(1/2,1/50)$-dense.
		\item\label{it:corb}If $\delta(G)> n/3$ and $\chi(G)\leq 3$, then $G$ is not $(1/2,1/50)$-dense.
	\end{enumerate}
\end{corollary}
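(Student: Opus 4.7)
The first step is a standard reduction to blow-ups. Setting $V_i=\phi^{-1}(v_i)$ and $n_i=|V_i|$, every edge of $G$ lies between some $V_i,V_j$ with $\{v_i,v_j\}\in E(F_d)$, so for any choice $U_i\subseteq V_i$ the set $U=\bigcup_i U_i$ satisfies $e(U)\le\sum_{\{i,j\}\in E(F_d)}|U_i|\,|U_j|$. Hence it suffices to exhibit integers $s_i\in\{0,1,\ldots,n_i\}$ with $\sum_i s_i=\lfloor n/2\rfloor$ and
\[
q(s):=\sum_{\{i,j\}\in E(F_d)}s_is_j\le\frac{n^2}{50}.
\]

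I would exploit the Cayley description of $F_d$ as the circulant of $\ZZ_{3d-1}$ with connection set $\{d,\ldots,2d-1\}$: its maximum independent sets, of size $d$, are precisely the arcs $I_k=\{k,k+1,\ldots,k+d-1\}$ modulo $3d-1$. If some arc satisfies $\sum_{i\in I_k}n_i\ge\lfloor n/2\rfloor$, take $U$ to be any $\lfloor n/2\rfloor$-subset of $\bigcup_{i\in I_k}V_i$; this gives $e(U)=0$ and closes the case. In the remaining spread-out regime every arc carries mass strictly less than $n/2$, and my plan is to pick the heaviest arc $I_{k_0}$, include all of $\bigcup_{i\in I_{k_0}}V_i$ in $U$, and complete to size $\lfloor n/2\rfloor$ by drawing from the two border parts $V_{k_0-1}$ and $V_{k_0+d}$. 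The key structural point is that $v_{k_0-1}$ is adjacent in $F_d$ to exactly one vertex of $I_{k_0}$, namely $v_{k_0+d-1}$; symmetrically $v_{k_0+d}$ meets $I_{k_0}$ only in $v_{k_0}$. Writing $X_{-}\subseteq V_{k_0-1}$ and $X_{+}\subseteq V_{k_0+d}$ for the extension, this gives
\[
q(s)\le|X_{-}|\,n_{k_0+d-1}+|X_{+}|\,n_{k_0}+|X_{-}|\,|X_{+}|,
\]
where the last term reflects that $\{v_{k_0-1},v_{k_0+d}\}$ is itself an edge of $F_d$ (at distance $d+1$). Indeed this template reproduces the tight $C_5$ construction for $d=2$ and the natural "arc-plus-one" extension for the balanced $F_3$ blow-up, in which case the bound evaluates to $n^2/64$.

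The main obstacle is verifying the bound $q(s)\le n^2/50$ uniformly over all mass distributions. Two regimes deserve extra care: the case when the borders $V_{k_0-1}$ and $V_{k_0+d}$ are themselves too small to accommodate $X$, forcing the extension to spill into further parts such as $V_{k_0\pm 2}$, each of which already has two neighbours in $I_{k_0}$; and the case of balanced or near-balanced mass, which is closest to the tight $C_5$ example. I would tackle both by averaging the above template over the $3d-1$ cyclic rotations and combining with the assumption that every arc has mass less than $n/2$, which sharply constrains the admissible distributions of the $n_i$. Since the bound $1/50$ is sharp only for $d=2$ whereas $d\ge 3$ affords genuine slack, the delicate point is to run a single argument, uniform in $d$, that both handles the case $d=2$ tightly and exploits the surplus when $d$ is larger.
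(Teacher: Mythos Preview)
Your proposal does not actually address the Corollary as stated. The hypotheses are purely about minimum degree (and chromatic number), yet you open by fixing a homomorphism $\phi\colon G\to F_d$ and working with the classes $V_i=\phi^{-1}(v_i)$, without ever explaining where $\phi$ comes from. The entire content of the Corollary in the paper is the passage from the degree hypotheses to such a homomorphism: part~\ref{it:cora} invokes Jin's theorem that $\delta(G)>10n/29$ forces $G$ to be homomorphic to $F_9$, and part~\ref{it:corb} invokes the Chen--Jin--Koh theorem that a triangle-free $3$-chromatic graph with $\delta(G)>n/3$ is homomorphic to some $F_d$. Once these are quoted, the Corollary is an immediate consequence of Theorem~\ref{thm:main}. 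Your write-up never mentions either structural result, so the deduction is missing.

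What you have actually sketched is an attempt at Theorem~\ref{thm:main} itself, and even there the proposal is not a proof but a plan with gaps you explicitly flag. The heaviest-arc-plus-borders template does recover the tight value $n^2/50$ for the balanced $C_5$ blow-up only if one extends on a single side (otherwise the cross term $|X_-|\,|X_+|$ already pushes you above $n^2/50$), and you have not argued that a single border is always large enough, nor handled the spillover case you yourself raise. The averaging-over-rotations idea is stated but not executed. By contrast, the paper proves Theorem~\ref{thm:main} (in the more general form of Theorem~\ref{thm:main2}) via a geometric embedding of $V(G)$ in $\RR/\ZZ$, a case split on whether $\alpha(G)<\frac{k}{2k+1}n$ or not, and in the first case a rather delicate winding argument (Claim~\ref{cl:1a}); this is a genuinely different route from your combinatorial arc argument, and it is carried through in full.
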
 
We remark that part~\ref{it:cora} slightly improves earlier results of Krivelevich~\cite{Kr95} and of 
Norin and Yepremyan~\cite{NoYe15} (see also~\cite{KeSu06} where an average degree condition was considered).

\subsection{Generalised Andr\'asfai graphs of higher odd-girth}
We consider the following straightforward variation of Andr\'asfai graphs of \emph{odd-girth} 
at least $2k+1$, i.e., graphs without odd cycles of length at most $2k-1$.
For integers $k\geq 2$ and $d\geq 1$
let $F^k_d$ be the $d$-regular 
graph with vertex set
\[
	V(F^k_d)=\{v_1,\dots,v_{(2k-1)(d-1)+2}\}\,,
\] 
where $\{v_i,v_j\}$ forms an edge if
\begin{equation}\label{eq:def:Fkd}
	(k-1)(d-1)+1\leq |i-j| \leq k(d-1)+1\,.
\end{equation}
In particular, for $k=2$ we recover the definition of the Andr\'asfai graphs from~\eqref{eq:def:Fd} and
for general $k\geq 2$ we have $F^k_1=K_2$, $F^k_2=C_{2k+1}$ and for every $d\geq 2$ the graph~$F^k_d$ 
has odd-girth~$2k+1$ (see Figure~\ref{figure:Fkd}).

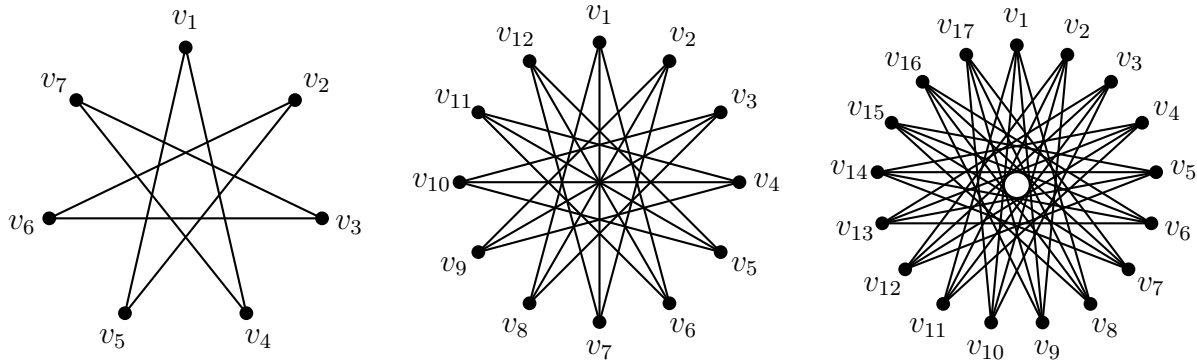
\begin{figure}[ht]
\begin{tikzpicture}[scale=0.93]
	\draw[opacity=0] (-2.6,-2.6) rectangle (2.6,2.6);
\foreach \x in {1,2,...,7}{
 		\fill (141.4286-\x*51.4286:2) circle (2.8pt);
 		\node at (141.4286-\x*51.4286:2.4) {$v_\x$};
		\draw[thick] (141.4286-\x*51.4286:2) -- (-12.8572-\x*51.4286:2);
	}
\end{tikzpicture}	
\hfill
\begin{tikzpicture}[scale=0.93]
	\draw[opacity=0] (-2.6,-2.6) rectangle (2.6,2.6);
\foreach \x in {1,2,...,12}{
 		\fill (120-\x*30:2) circle (2.8pt);
		\node at (120-\x*30:2.4) {$v_{\x}$};
		\draw[thick] (120-\x*30:2) -- (-30-\x*30:2);
	} 
	\foreach \x in {1,2,...,6}
		\draw[thick] (120-\x*30:2) -- (-60-\x*30:2);
\end{tikzpicture}
\hfill
\begin{tikzpicture}[scale=0.93]
	\draw[opacity=0] (-2.6,-2.6) rectangle (2.6,2.6);
\foreach \x in {1,2,...,17}{
		\fill (111.1765-\x*21.1765:2) circle (2.8pt);
		\node at (111.1765-\x*21.1765:2.4) {$v_{\x}$};
		\draw[thick] (111.1765-\x*21.1765:2) -- (-37.0588-\x*21.1765:2);
		\draw[thick] (111.1765-\x*21.1765:2) -- (-58.2353-\x*21.1765:2);
	}
\end{tikzpicture}
\caption{Generalised Andr\'asfai graphs $F^3_2$, $F^3_3$, and $F^3_4$ of odd-girth 7.}
\label{figure:Fkd}
\end{figure}

Our main result generalises Theorem~\ref{thm:main} 
for graphs of odd-girth at least $2k+1$. 
In fact, the constant $\frac{1}{2(2k+1)^2}$ appearing in Theorem~\ref{thm:main2}
is best possible as balanced blow-ups of~$C_{2k+1}$ show. One can attain this bound by taking $k$ mutually independent parts of the blow-up completely and the missing $n/2-kn/(2k+1)$ vertices from one part, which has only edges to one of the parts that we already chose.

\begin{theorem}\label{thm:main2}
	If a graph $G$ is homomorphic to a generalised Andr\'asfai graph $F^k_d$ for some integers $k\geq 2$ and $d\geq 1$, 
	then $G$ is not $(\frac{1}{2},\frac{1}{2(2k+1)^2})$-dense.
\end{theorem}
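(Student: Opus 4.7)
The proof is a natural generalisation of that of Theorem~\ref{thm:main}. Set $N=(2k-1)(d-1)+2$, let $\phi\colon V(G)\to V(F^k_d)=\{v_1,\dots,v_N\}$ be the given homomorphism, and put $n_i=|\phi^{-1}(v_i)|$ and $x_i=n_i/n$, so that $\sum_i x_i=1$. Recall that $F^k_d$ is a circulant graph on $\ZZ_N$ whose edges correspond to pairs at linear index distance in $[(k-1)(d-1)+1,\,k(d-1)+1]$; in particular any $(k-1)(d-1)+1$ consecutive indices span an independent set.

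My plan is to exhibit weights $w_1,\dots,w_N\in[0,1]$ with $\sum_i w_ix_i = 1/2$ and
\[
	\sum_{\{i,j\}\in E(F^k_d)} w_iw_j n_i n_j \;\le\; \frac{n^2}{2(2k+1)^2}\,.
\]
A routine rounding---deterministic greedy, or randomised with the method of conditional expectations---then converts this fractional selection into a subset $S\subseteq V(G)$ of size exactly $\lfloor n/2\rfloor$ satisfying $e(S)\le n^2/(2(2k+1)^2)$.

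To construct the weights I would split according to how concentrated $x$ is along the cyclic ordering. If some cyclic window of $(k-1)(d-1)+1$ consecutive classes carries $x$-mass at least $1/2$, take $w$ to be the indicator of such a window (fractionally trimmed down to mass exactly $1/2$); the window is independent in $F^k_d$, so the weighted edge count vanishes. Otherwise every such independent window has mass strictly less than $1/2$, and I would pass to cyclic windows of $(k-1)(d-1)+1+\ell$ consecutive classes for the smallest $\ell\ge1$ with the property that the maximum such window has $x$-mass at least $1/2$; a short cyclic averaging gives $\ell\le\lceil(d-1)/2\rceil$. Each enlarged window contains exactly $\ell(\ell+1)/2$ edges of $F^k_d$, all of them joining the first $\ell$ classes of the window to the last $\ell$. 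With this set-up one sets $w$ to be the indicator of the enlarged window, trims one of the heavier boundary classes to achieve mass exactly $1/2$, and bounds the weighted edges along the boundary.

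The main obstacle is the quantitative estimate in the second case. One must combine the spread hypothesis (no independent window has $x$-mass at least $1/2$) with the bound on $\ell$ to show that, after trimming, the contribution of the surviving edges at the window's boundary does not exceed $1/(2(2k+1)^2)$. The tight instance is the balanced blow-up of $C_{2k+1}$ embedded in $F^k_d$ on $2k+1$ equally spaced classes of weight $1/(2k+1)$ each, where the bound is attained with equality. Reducing the general case to this extremal configuration, either by a weight-shifting (compression) argument that weakly increases the weighted edge count while concentrating $x$ on a $C_{2k+1}$-support, or by solving the underlying KKT conditions of the constrained quadratic optimisation directly, is the principal technical step. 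For $k=2$ this step is already carried out in the proof of Theorem~\ref{thm:main}; extending to higher $k$ requires only routine adjustments of the window parameters from the Andr\'asfai constants $d$, $3d-1$, $5$ to their generalised analogues $(k-1)(d-1)+1$, $N$, $2k+1$.
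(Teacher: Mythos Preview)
Your proposal is not a proof but a plan, and its crucial step is left unproven. You explicitly identify ``the principal technical step'' --- showing that in the no-large-independent-window case the trimmed boundary contributes at most $1/(2(2k+1)^2)$ --- and then do not carry it out. The appeal to ``the proof of Theorem~\ref{thm:main}'' is circular: in this paper Theorem~\ref{thm:main} is simply the case $k=2$ of Theorem~\ref{thm:main2}, and there is no independent argument to borrow. Even granting $k=2$ from elsewhere, the assertion that ``extending to higher $k$ requires only routine adjustments'' is not substantiated; the paper's own treatment of the spread case involves a nontrivial averaging/winding argument whose analogue in your window language is not visible. A further soft spot is the rounding: passing from fractional $w$ with weighted edge count exactly $n^2/(2(2k+1)^2)$ (which occurs in the extremal $C_{2k+1}$ configuration you mention) to an integer set of size $\lfloor n/2\rfloor$ with at most that many edges is not automatic and would need the strict-versus-nonstrict inequality sorted out.

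For comparison, the paper does not attempt a single window-and-trim argument. It first passes to the geometric representation on $\RR/\ZZ$ and then splits by the independence number. When $\alpha(G)\ge \tfrac{k}{2k+1}n$ the argument is indeed window-flavoured (Proposition~\ref{thm:general}), but it combines several estimates from Lemma~\ref{lem:useful} rather than a single boundary count, and even then needs separate attention for $k\in\{2,3\}$. When $\alpha(G)<\tfrac{k}{2k+1}n$ the proof (Proposition~\ref{thm:Fd1}) is quite different: one derives a pointwise inequality (Claim~\ref{cl:1a}) for every half-circle starting point and then obtains a contradiction by iterating the map $\xi\mapsto z_\xi-\tfrac{k-1}{2k-1}$ around the circle. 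Your outline has no counterpart to this mechanism, which is precisely what handles the case where the mass is too spread out for any short window to work.
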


Analogous to the relation between Conjecture~\ref{conjecture:erdos} and Theorem~\ref{thm:main} one may wonder
if every $(\frac{1}{2},\frac{1}{2(2k+1)^2})$-dense graph contains an odd cycle of length at most~$2k-1$. 
Letzter and Snyder \cite{LS} showed that a  graph $G$ on $n$ vertices with $\delta(G)>\frac{n}{5}$ and odd-girth at least 7 is homomorphic to $F_k^3$, for some $k$.
Therefore combining this result with Theorem~\ref{thm:main2} we get the following.
\begin{corollary}
	Let $G$ be a graph with odd-girth at least $7$ on $n$ vertices.
	If $\delta(G)>\frac{n}{5}$, then~$G$ is not $(\frac{1}{2},\frac{1}{98})$-dense.
\end{corollary}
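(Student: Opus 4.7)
The plan is to derive the corollary as a direct consequence of Theorem~\ref{thm:main2} combined with a structural homomorphism theorem from \cite{MeSch15}. Specifically, the work of Messuti and Schacht in \cite{MeSch15} provides the higher odd-girth analogue of the Chen--Jin--Koh and Jin theorems invoked earlier for the triangle-free case: they show that any $n$-vertex graph $G$ with odd-girth at least $2k+1$ and $\delta(G)>\frac{3n}{4k}$ admits a graph homomorphism into some generalised Andr\'asfai graph $F^k_d$. The common underlying phenomenon is that a sufficiently large minimum degree in a graph avoiding short odd cycles forces an Andr\'asfai-like rigidity on the neighbourhood structure.

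Given such a homomorphism, the conclusion is immediate. Theorem~\ref{thm:main2} asserts that any graph which is homomorphic to some $F^k_d$ fails to be $(\tfrac{1}{2},\tfrac{1}{2(2k+1)^2})$-dense. Applying this to our $G$ yields precisely the corollary. The argument therefore has the same two-step shape as the corollary deduced from Theorem~\ref{thm:main} for $k=2$: a structural result converts the minimum degree hypothesis into a homomorphism into an Andr\'asfai-type graph, and then the main theorem of this paper converts the homomorphism into the failure of local density.

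The main (and essentially only) obstacle is to confirm that the exact numerical threshold $\delta(G)>\frac{3n}{4k}$ in the corollary matches the hypothesis of the relevant homomorphism statement in \cite{MeSch15}; once that is verified, no independent combinatorial work is needed. In particular, no new calculations or case analyses beyond those already packaged in the two cited results are required.
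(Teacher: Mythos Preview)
Your proposal is correct and matches the paper's approach exactly: the paper likewise derives this corollary directly from Theorem~\ref{thm:main2} combined with the homomorphism result of Messuti and Schacht~\cite{MeSch15}, with no additional argument supplied. Your only stated caveat---matching the threshold $\delta(G)>\tfrac{3n}{4k}$ to the hypothesis in~\cite{MeSch15}---is precisely the content the paper is invoking, so nothing further is needed.
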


A similar question for even holes is not interesting, because every dense graph contains a $4$-cycle.

For $k=2$ Theorem~\ref{thm:main2} reduces to Theorem~\ref{thm:main} and the rest of this work 
concerns the proof of Theorem~\ref{thm:main2}.
The proof is given in Section~\ref{sec:general} and makes 
use of a geometric representation of graphs homomorphic to generalised Andr\'asfai graphs, which 
we introduce in Section~\ref{sec:geoA}.

\section{A geometric characterisation of generalised Andr\'asfai graphs}\label{sec:geoA}
We consider graphs $G$ that are homomorphic to some generalised Andr\'asfai graph $F^k_d$.  
For the proof of Theorem~\ref{thm:main2}
it will be convenient to work with a geometric representation of such graphs~$G$.
In fact, already the original definitions of Andr\'afai graphs in~\cites{Er57,A62} were geometric. 
In that representation we will arrange the vertices of~$G$ on the unit circle~$\RR/\ZZ$ 
and edges between two vertices 
$x$ and~$y$ may only appear depending on their angle with respect to the centre of the circle (see Lemma~\ref{lemma:Fkd}). For the proof of Theorem~\ref{thm:main2} it suffices to 
consider edge maximal graphs~$G$ that are homomorphic to $F^k_d$ for some integers $k\geq 2$ and $d\geq 1$. 
In other words, we may assume $G$ is a blow-up of 
$F^k_d$.

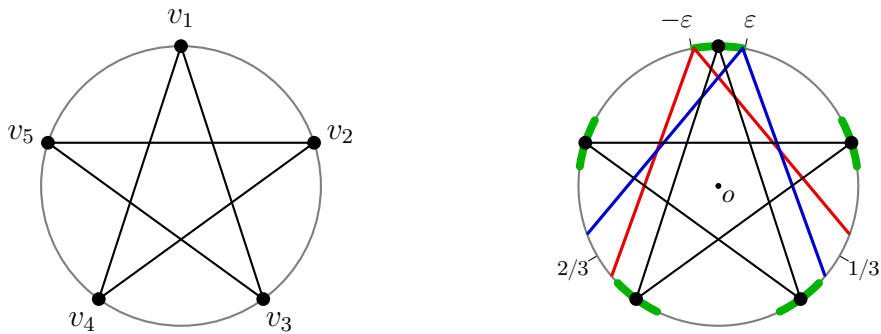
\begin{figure}[ht]
\begin{tikzpicture}[scale=0.93]
	\draw[opacity=0] (-2.6,-2.6) rectangle (2.6,2.6);

	\draw[thick,black!50!white] (0,0) circle (2.0);

 	\foreach \x in {1,2,...,5}{
 		\fill (162-\x*72:2) circle (2.8pt);
 		\node at (162-\x*72:2.4) {$v_\x$};
		\draw[thick] (162-\x*72:2) -- (18-\x*72:2);
	}
\end{tikzpicture}	
\hspace{2cm}
\begin{tikzpicture}[scale=0.93]
	\draw[opacity=0] (-2.6,-2.6) rectangle (2.6,2.6);

	\draw (101:2) -- (101:2.15);
	\draw (79:2) -- (79:2.15);
	\node at ($(101:2.4)-(4pt,-0.3pt)$) {\footnotesize$-\eps$};
	\node at (79:2.4) {\footnotesize$\eps$};
	
	\draw (-30:2) -- (-30:2.15);
	\draw (210:2) -- (210:2.15);
	\node at (-30:2.4) {\tiny$1/3$};
	\node at (210:2.4) {\tiny$2/3$};
	
	\foreach \i in { 28,  100,  172,  244, 316}
    	\draw[thick,black!50!white] (\i:2cm) arc [start angle=\i,radius=2cm, delta angle=52];
	
    \foreach \i in {8,  80, 152, 224, 296}     
    	\draw[very thick, line width = 3pt, 
green!70!black, line cap=round] 
			(\i:2cm) arc [start angle=\i,radius=2cm, delta angle=20];

	\draw[red!90!black, very thick, line cap=round] (28+72:2) -- (130+18+72:1.985);
	\draw[red!90!black, very thick, line cap=round] (28+72:2) -- (250+18+72:1.985);
	\draw[blue!80!black, very thick, line cap=round] (8+72:2) -- (110+18+72:1.985);
	\draw[blue!80!black, very thick, line cap=round] (8+72:2) -- (230+18+72:1.985);

	\foreach \x in {1,2,...,5}
		\draw[black,thick] (162-\x*72:2) -- (18-\x*72:2);

 	\foreach \x in {1,2,...,5}{
\fill[black] (162-\x*72:2) circle (2.8pt);
	}
	
	\fill[black] (0,0) circle (1.2pt);
	\node at (-45:0.2) {\footnotesize$o$};

\end{tikzpicture}
\caption{A copy of $F_2=C_5$ and a representation of a blow-up on the unit circle.}
\label{fig:C5-circle}
\end{figure}

For example, let $G$ be a blow-up of $F_2=C_5$.
One can distribute the vertices of $F_2$ equally spaced on the unit circle (see Figure~\ref{fig:C5-circle}).
Then we place all vertices of~$G$ that correspond to the blow-up class of~$v_i$ into a small $\eps$-ball 
around $v_i$ on the unit circle (cf.\ green arcs in Figure~\ref{fig:C5-circle}).
For a sufficiently small $\eps$, all vertices in an $\eps$-ball around~$v_i$ have the same neighbours
and they can be characterised by having their smaller angle with respect to the centre bigger than $120^\circ$
(cf.\ red and blue lines in Figure~\ref{fig:C5-circle}).
The following lemma states this fact for blow-ups of generalised Andr\'asfai graphs.

\begin{lemma}\label{lemma:Fkd}
If $G$ is a blow-up of a generalised Andr\'asfai graph $F^k_d$ 
for some integers $k\geq 2$ and $d\geq 1$, then the vertices of $G$ can be arranged on the unit
circle $\RR/\ZZ$ with centre~$o$ such that 
\begin{equation}\label{eq:xoy}
	\{x,y\}\in E(G)
	\qquad\Longleftrightarrow\qquad
	\sphericalangle xoy>\frac{k-1}{2k-1}\cdot360^\circ\,,
\end{equation}
where $\sphericalangle xoy$ denotes the smaller angle between $x$ and $y$ with respect to~$o$.
\end{lemma}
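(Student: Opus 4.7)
The plan is to use the blow-up structure of $G$ to place its vertices on the circle via the equally spaced embedding of $F^k_d$. Write $N := (2k-1)(d-1)+2$, $L := (k-1)(d-1)+1$, and $U := k(d-1)+1$, so that~\eqref{eq:def:Fkd} reads $\{v_i,v_j\}\in E(F^k_d) \iff L\leq |i-j|\leq U$, and observe the key identity $L+U=N$. Fix a homomorphism $\phi\colon V(G)\to V(F^k_d)$ witnessing that $G$ is a blow-up, and place every $x\in V(G)$ at the point $\frac{\phi(x)-1}{N}\in\RR/\ZZ$; vertices in the same blow-up class thereby land on top of each other, which is harmless since they span no edges in $G$ and enclose an angle of $0$.

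With this placement, the angle subtended at the centre by $x$ and $y$ with $\phi(x)=v_i$ and $\phi(y)=v_j$ is
\[
\sphericalangle xoy \;=\; \min(|i-j|,\,N-|i-j|)\cdot\frac{360^\circ}{N}\,.
\]
Using $L+U=N$, a short case distinction shows that $L\leq |i-j|\leq U$ (i.e.\ $v_i$ and $v_j$ are adjacent in $F^k_d$) is equivalent to $\min(|i-j|,\,N-|i-j|)\geq L$, whereas any non-adjacent pair (including the case $i=j$) satisfies $\min(|i-j|,\,N-|i-j|)\leq L-1$.

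It therefore suffices to verify that the threshold angle $\frac{k-1}{2k-1}\cdot 360^\circ$ lies strictly between $(L-1)\tfrac{360^\circ}{N}$ and $L\cdot\tfrac{360^\circ}{N}$, since this exactly separates edges from non-edges. After cross-multiplying, the two inequalities $(L-1)(2k-1)<(k-1)N$ and $L(2k-1)>(k-1)N$ simplify (upon substituting $N=(2k-1)(d-1)+2$) to $2(k-1)>0$ and $(2k-1)-2(k-1)>0$, respectively, both of which clearly hold for $k\geq 2$. No step presents a real obstacle; the only point that needs care is to keep track of both arcs of the circle when passing from $|i-j|$ to the shorter angle, and the identity $L+U=N$ handles this cleanly.
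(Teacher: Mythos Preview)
Your proof is correct and follows essentially the same idea as the paper: map the blow-up class of $v_i$ to the point $\tfrac{i-1}{N}$ on the circle and verify that the angular threshold $\tfrac{k-1}{2k-1}\cdot 360^\circ$ separates edges from non-edges. The one difference is that the paper spreads each blow-up class into a small $\eps$-ball around $\tfrac{i-1}{N}$ (with $\eps=\tfrac{1}{2(2k-1)N}$) so that distinct vertices occupy distinct points, whereas you stack each class at a single point; your version is slightly cleaner algebraically and your identity $L+U=N$ makes the case analysis transparent, while the paper's injective placement is what is tacitly used downstream when quantities like $z_\xi$ are defined as specific vertices of $V$.
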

We remark that conversely every graph $G=(V,E)$ with $V\subseteq \RR/\ZZ$ satisfying~\eqref{eq:xoy}
is a blow-up of $F^k_d$ for some appropriate $d\geq 1$. However, since this direction is not utilised here, 
we omit the formal proof of this observation.
\begin{proof}[Proof of Lemma~\ref{lemma:Fkd}]
For integers $k\geq 2$ and $d\geq 1$ let $G$ be a blow-up of the generalised
Andr\'asfai graph~$F^k_d$ (defined in~\eqref{eq:def:Fkd})
signified by some graph homomorphism $\phi\colon G\to F^k_d$
and let $m=(2k-1)(d-1)+2$ be the number of vertices of $F^k_d$.
Set
\[
	\eps=\frac{1}{2(2k-1)m}\,.
\]
For $i\in[m]$ we arrange the vertices of $G$ that are contained 
in $\phi^{-1}(v_i)$ in the $\eps$-ball around the point~$\tfrac{i-1}{m}$.
Owing to the symmetry it suffices to check that~\eqref{eq:xoy} holds 
for an arbitrary  vertex $x\in\phi^{-1}(v_1)\subseteq V(G)$.

By definition of~$F^k_d$ the neighbourhood of $v_1$ is 
\[
	N(v_1)=\{v_{(k-1)(d-1)+2},\dots,v_{k(d-1)+2}\}\,.
\]
Note that the choice of $\eps$ gives
\[
	\big(\tfrac{(k-1)(d-1)+1}{m}-\varepsilon, \tfrac{k(d-1)+1}{m}+\varepsilon\big)
	= 
	\big(\tfrac{k-1}{2k-1}+\varepsilon,\tfrac{k}{2k-1}-\varepsilon\big)
\]
and, consequently, all neighbours~$y$ of $x$ are placed in the interval $(\tfrac{k-1}{2k-1}+\varepsilon,\tfrac{k}{2k-1}-\varepsilon)$.
Since $x\in\phi^{-1}(v_1)$ itself is placed in $(-\eps,\eps)$, this implies the forward direction of~\eqref{eq:xoy}.
The converse direction follows 
from the observation 
\[
	\big(\tfrac{i-1}{m}-\varepsilon, \tfrac{i-1}{m}+\varepsilon\big)
	\cap
	\big(\tfrac{k-1}{2k-1}-\varepsilon,\tfrac{k}{2k-1}+\varepsilon\big)
	=\emptyset
\]
for every $i\in[m]\setminus\{(k-1)(d-1)+2,\dots,k(d-1)+2\}$.
\end{proof}

We close this section with a few useful estimates on the number of vertices contained in
intervals of $\RR/\ZZ$ for geometric representations of blow-ups~$G$ of generalised 
Andr\'asfai graphs. Let $V$ be the set of points of the unit circle that are identified 
with the vertices of~$G$. For an interval $I\subseteq\RR/\ZZ$, 
we write $\lambda(I)$ for the number of vertices of $G$ contained in~$I$, i.e.,
\begin{equation}\label{eq:def:lambda}
	\lambda(I)=|V\cap I|\,.
\end{equation}
This defines expressions such as $\lambda\bigl([a, b]\bigr)$, $\lambda\bigl([a, b)\bigr)$, etc.

Since subsets of $\lfloor  n/2\rfloor$ vertices are of special interest, we denote
for every~$\xi\in \RR/\ZZ$ by~$z_\xi$ the vertex from~$V$ with the property
\begin{equation}\label{eq:def:zxi}
	\lambda\bigl([\xi,z_\xi]\bigr)=\lfloor  n/2\rfloor\,.
\end{equation}
In the proof of Theorem~\ref{thm:main2} we shall use the following lemma and, since the proof 
will be carried out by contradiction, the graphs $G$ that we shall consider also satisfy the density assumption 
for parts~\ref{it:u:5} and~\ref{it:u:6}.

\begin{lemma}\label{lem:useful}
	For integers $k\geq 2$ and $d\geq 1$ let $G=(V,E)$ 
	be a blow-up of the generalised Andr\'asfai graph $F^k_d$ 
	having a geometric representation
	with $V\subseteq \RR/\ZZ$ satisfying~\eqref{eq:xoy} and~$|V|=n$. Then the following holds 
	for every interval $I\subseteq\RR/\ZZ$:
	\begin{enumerate}[label=\rmlabel]
		\item\label{it:u:1} If $I$ has length at most $\frac{k-1}{2k-1}$, then $V\cap I$
		is an independent set in $G$ and $\lambda(I)\leq \alpha(G)$.
		\item\label{it:u:2} If $I$ has length at most $\frac{1}{2k-1}$, then 
		$\lambda(I)\leq (2k-3)\alpha(G)-(k-2)n$.
		\item\label{it:u:3} If $I$ has length at least $\frac{1}{2k-1}$, then 
		$\lambda(I)\geq n-2\alpha(G)$.
	\end{enumerate}
	If in addition~$G$ is $(\frac{1}{2},\frac{1}{2(2k+1)^2})$-dense and $2(2k+1)\mid n$, then 
 	the following holds for $\xi\in\RR/\ZZ$:
	\begin{enumerate}[label=\rmlabel]
	\setcounter{enumi}{3}

		\item\label{it:u:5} 
		If $\lambda\big([\xi,\xi+\frac{k-1}{2k-1}]\big)=\alpha(G)$, then $\lambda\big([\xi,z_\xi-\tfrac{k-1}{2k-1})\big)> 2\alpha(G)-\frac{2k-1}{2k+1}n$.
\item\label{it:u:6} We have
			$\lambda\big((\xi-\tfrac{1}{2k-1},\xi+\tfrac{1}{2k-1})\big)
					> \frac{4}{2k+1}n-2\lambda\big((\xi+\tfrac{k-1}{2k-1},\xi-\tfrac{k-1}{2k-1})\big)$.
\end{enumerate}
\end{lemma}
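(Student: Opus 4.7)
Write $\beta := (k-1)/(2k-1)$, so~\eqref{eq:xoy} says two vertices are adjacent iff their circular distance exceeds $\beta$. Part~\ref{it:u:1} is immediate: two vertices in an arc of length at most $\beta$ are at circular distance at most $\beta$, hence non-adjacent, which makes $V \cap I$ independent. Part~\ref{it:u:3} follows at once as well, since $I^c$ has length at most $1 - 1/(2k-1) = 2\beta$ and can be split into two arcs of length at most $\beta$, each independent by~\ref{it:u:1}, so $\lambda(I^c) \leq 2\alpha(G)$ and $\lambda(I) = n - \lambda(I^c) \geq n - 2\alpha(G)$.

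The plan for~\ref{it:u:2} is a double-counting argument with $2k-3$ independent arcs. For $I = [a, a+\ell]$ with $\ell \leq 1/(2k-1)$, I would take the $k-1$ arcs $A_j := [a - j/(2k-1),\, a + (k-1-j)/(2k-1)]$ for $j = 0, \ldots, k-2$, each of length $\beta$ and containing $I$, together with the $k-2$ arcs $B_i := [a + (i+1)/(2k-1),\, a + (i+k)/(2k-1)]$ for $i = 1, \ldots, k-2$, each of length $\beta$ and disjoint from $I$. A routine case check in the scaled coordinate $(x-a)(2k-1)$ shows that every vertex of $I$ lies in exactly the $k-1$ arcs $A_j$, while every vertex of $I^c$ lies in exactly $k-2$ of the $2k-3$ arcs. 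Applying~\ref{it:u:1} to each arc and summing yields $(k-1)\lambda(I) + (k-2)\lambda(I^c) \leq (2k-3)\alpha(G)$, which together with $\lambda(I) + \lambda(I^c) = n$ gives~\ref{it:u:2}.

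For~\ref{it:u:5} and~\ref{it:u:6} I would invoke the density hypothesis on the subset $S := V \cap [\xi, z_\xi]$, which contains exactly $n/2$ vertices and therefore spans strictly more than $n^2/(2(2k+1)^2)$ edges. The overall idea is to upper-bound $e(G[S])$ by using the previous parts: any vertex $u \in S$ at position $p$ has $G$-neighbours in the arc $(p+\beta, p+1-\beta)$ of length $1/(2k-1)$, which by~\ref{it:u:2} contains at most $(2k-3)\alpha(G) - (k-2)n$ vertices. For~\ref{it:u:5}, the extra hypothesis $\lambda([\xi, \xi+\beta]) = \alpha(G)$ says that $A := [\xi, \xi+\beta]$ is a maximum independent set and so contributes no internal edges; refining the edge count with this structural information and comparing it against the density lower bound, together with $\lambda(J) + \lambda([z_\xi - \beta, z_\xi]) = n/2$ (where $J = [\xi, z_\xi - \beta)$), yields the required lower bound on $\lambda(J)$. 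Part~\ref{it:u:6} is proved by a similar counting argument after decomposing the circle into the ``near'' arc of length $2/(2k-1)$ about $\xi$, the two ``side'' arcs (each independent by~\ref{it:u:1}), and the ``opposite'' arc of length $1/(2k-1)$, and estimating the edge contributions between these parts.

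I expect the main obstacle to lie in the density-based arguments for~\ref{it:u:5} and~\ref{it:u:6}: the bookkeeping must correctly handle edges near the boundary of $S$, where a vertex's neighbourhood may be truncated by $S$, and the arithmetic must be tuned to match the precise constants $\tfrac{2k-1}{2k+1}$ and $\tfrac{4}{2k+1}$ appearing in the statements.
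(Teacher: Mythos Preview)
Your arguments for~\ref{it:u:1}--\ref{it:u:3} are correct and match the paper's; your covering for~\ref{it:u:2} is a mild repackaging of the paper's ``$2k-3$ consecutive arcs of length $\tfrac{k-1}{2k-1}$ wrapping $k-2$ times around'' but gives the same inequality.

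For~\ref{it:u:5} and~\ref{it:u:6}, however, your sketch is missing the decisive step, and the specific tool you reach for --- the degree bound coming from~\ref{it:u:2} --- will not deliver the stated constants. Bounding each vertex's degree by $(2k-3)\alpha(G)-(k-2)n$ produces an upper bound on $e(G[S])$ that is independent of $\lambda(J)$, so no amount of ``refining'' it will yield a lower bound on $\lambda(J)$. The observation you actually need is much more elementary: any edge inside $S=[\xi,z_\xi]\cap V$ must join a vertex of $[\xi,z_\xi-\beta)$ to a vertex of $(\xi+\beta,z_\xi]$, because its endpoints are more than $\beta$ apart. Writing $a=\lambda\bigl([\xi,z_\xi-\beta)\bigr)$ and $b=\lambda\bigl((\xi+\beta,z_\xi]\bigr)$, this gives $e(G[S])\le ab$, and combining with the density hypothesis and the AM--GM estimate $2ab\le\tfrac14(a+2b)^2$ yields the single linear inequality
\[
a>\frac{2n}{2k+1}-2b\,.
\]
This inequality is the engine for both parts. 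For~\ref{it:u:5} one simply substitutes $b=n/2-\alpha(G)$, which is exactly where the hypothesis $\lambda\bigl([\xi,\xi+\beta]\bigr)=\alpha(G)$ enters. For~\ref{it:u:6} one applies the same inequality once at $\xi$ and once at $\xi$ with the orientation reversed, then adds; the two ``$b$'' terms together account for all vertices in the opposite arc $(\xi+\beta,\xi-\beta)$, which is how the factor $2$ on the right-hand side arises. Your proposed four-arc decomposition for~\ref{it:u:6} does not naturally produce this doubling, and the divisibility hypothesis $2(2k+1)\mid n$ is there precisely to make the two strict inequalities combine cleanly when $\xi\in V$.
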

\begin{proof}
Part~\ref{it:u:1} follows directly from the definition of the geometric 
representation in~\eqref{eq:xoy}. For part~\ref{it:u:2} we note 
that 
\[
	(2k-3)\frac{k-1}{2k-1}=(k-2)+\frac{1}{2k-1}\,.
\]
Consequently, there exist $2k-3$ consecutive intervals of length $\frac{k-1}{2k-1}$ that
wrap $k-2$ times around~$\RR/\ZZ$ in such a way that only $I$ is covered $k-1$ times.
Therefore,~\ref{it:u:1} yields
\[
	(2k-3)\alpha(G)\geq (k-2)n+\lambda(I)
\]
and the desired estimate follows.

Part~\ref{it:u:3} is also a consequence of~\ref{it:u:1} and the observation that
there are two intervals of length at most $\frac{k-1}{2k-1}$ that together with $I$
cover $\RR/\ZZ$ once.

In the proofs of parts~\ref{it:u:5} and~\ref{it:u:6} we make use of the inequality
\begin{equation}\label{eq:u:4}
	\lambda\big([\xi,z_\xi-\tfrac{k-1}{2k-1})\big)
	>
	\frac{2n}{2k+1}-2\lambda\big((\xi+\tfrac{k-1}{2k-1},z_\xi]\big)\,,
\end{equation}
which we show first. For that we note that~\eqref{eq:xoy} implies
\begin{align*}
	e_G\big([\xi,z_\xi]\cap V\big)
	&\leq 
	\lambda\big([\xi,z_\xi-\tfrac{k-1}{2k-1})\big)\cdot \lambda\big((\xi+\tfrac{k-1}{2k-1},z_\xi]\big)\,.
\end{align*}
Hence, the additional assumption that~$G$ is $(\frac{1}{2},\frac{1}{2(2k+1)^2})$-dense
combined with the simplest case of the 
inequality between the arithmetic and geometric mean yields
\[
	\left(\frac{n}{2k+1}\right)^2
	<
	2e_G\big([\xi,z_\xi]\cap V\big)
	\leq
	\frac{1}{4}\Big(\lambda\big([\xi,z_\xi-\tfrac{k-1}{2k-1})\big)+2\lambda\big((\xi+\tfrac{k-1}{2k-1},z_\xi]\big)\Big)^2
	\,,
\]
which establishes~\eqref{eq:u:4}.

The remaining parts~\ref{it:u:5} and~\ref{it:u:6} follow from~\eqref{eq:u:4}.
In fact, for~\ref{it:u:5}  the additional assumption $\lambda\big([\xi,\xi+\frac{k-1}{2k-1}]\big)=\alpha(G)$
yields $\lambda\big((\xi+\tfrac{k-1}{2k-1},z_\xi]\big)=n/2-\alpha(G)$ and, hence,~\ref{it:u:5}
follows from~\eqref{eq:u:4}.

For the proof of~\ref{it:u:6} we will apply~\eqref{eq:u:4} twice. First we apply it for the given 
$\xi\in\RR/\ZZ$ and, since by~\ref{it:u:1} we also have 
$z_\xi\in(\xi+\tfrac{k-1}{2k-1},\xi+\tfrac{k}{2k-1})$, we obtain
\begin{equation}\label{eq:u:6a}
	\lambda\big([\xi,\xi+\tfrac{1}{2k-1})\big)
	\geq
	\lambda\big([\xi,z_\xi-\tfrac{k-1}{2k-1})\big)
	\overset{\eqref{eq:u:4}}{>}
	\frac{2n}{2k+1}-2\lambda\big((\xi+\tfrac{k-1}{2k-1},z_\xi]\big)\,.
\end{equation}
The second symmetric application of~\eqref{eq:u:4} in $-\RR/\ZZ$ 
to~$-\xi$ yields
\begin{equation}\label{eq:u:6b}
\lambda\big((\xi-\tfrac{1}{2k-1},\xi]\big)
	\overset{\eqref{eq:u:4}}{>}
	\frac{2n}{2k+1}-2\lambda\big([z'_{\xi},\xi-\tfrac{k-1}{2k-1})\big)\,,
\end{equation}
for $z'_\xi\in(z_\xi,\xi)$ with $\lambda\big([z'_\xi,\xi]\big)= n/2$.
Consequently, if $\xi\not\in V$ then summing the inequalities~\eqref{eq:u:6a} and~\eqref{eq:u:6b} 
yields part~\ref{it:u:6}. However, if $\xi\in V$ then still the same conclusion follows, 
since $(2k+1)\mid n$ implies that the right-hand sides of~\eqref{eq:u:6a} and~\eqref{eq:u:6b} are integers
and both inequalities are strict.
\end{proof}

\section{Blow-ups of generalised Andr\'asfai graphs}\label{sec:general}
In this section we establish Theorem~\ref{thm:main2}. For that it suffices to show 
that blow-ups~$G$ of generalised Andr\'asfai graphs~$F^k_d$ are not $(\frac{1}{2},\frac{1}{2(2k+1)^2})$-dense 
and we will appeal to the geometric representation 
from Lemma~\ref{lemma:Fkd} of such graphs.
The strategy of our proofs is that we try to find an interval of consecutive vertices spanning few edges. To this end we distinguish two cases depending on the independence number $\alpha(G)$ and start with the case
that $\alpha(G)$ is not too large.

\begin{proposition}\label{thm:Fd1}
If $G$ is a blow-up of a generalised Andr\'asfai graph $F^k_d$ 
for some integers $k\geq 2$ and $d\geq 1$ with $|V(G)|=n$
and $\alpha(G)< \frac{k}{2k+1}n$, then $G$ is not $(\frac{1}{2},\frac{1}{2(2k+1)^2})$-dense.
\end{proposition}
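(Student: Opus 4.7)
The plan is to exhibit a concrete subset of $\lfloor n/2\rfloor$ vertices of the form $V\cap[\xi,z_\xi]$ with at most $\frac{n^2}{2(2k+1)^2}$ internal edges. Using Lemma~\ref{lemma:Fkd} I place $V(G)$ on the unit circle satisfying~\eqref{eq:xoy}, and choose $\xi\in\RR/\ZZ$ so that $\lambda([\xi,\xi+\tfrac{k-1}{2k-1}])=\alpha(G)$, i.e.\ $\xi$ begins a maximum independent arc. Writing $\ell=z_\xi-\xi$, the hypothesis $\alpha(G)<n/2$ gives $\ell>\tfrac{k-1}{2k-1}$, and applying Lemma~\ref{lem:useful}\ref{it:u:3} to the interval $(\xi+\tfrac{k-1}{2k-1},z_\xi]$ of $\lambda$-value $\tfrac{n}{2}-\alpha(G)$ forces $\ell\leq\tfrac{2(k-1)}{2k-1}$.

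I then decompose $[\xi,z_\xi]$ into three intervals: $L=[\xi,z_\xi-\tfrac{k-1}{2k-1})$, $M=[z_\xi-\tfrac{k-1}{2k-1},\xi+\tfrac{k-1}{2k-1}]$, and $R=(\xi+\tfrac{k-1}{2k-1},z_\xi]$, with $\lambda$-values $X,\mu,Y$ respectively. The geometric criterion~\eqref{eq:xoy} implies that every edge of $G$ inside $V\cap[\xi,z_\xi]$ joins $L$ to $R$, so the internal edge count is at most $XY$. From the defining choice of $\xi$ one has $X+\mu=\alpha(G)$ and $Y=\lfloor n/2\rfloor-\alpha(G)$. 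I then invoke the asymmetric AM--GM inequality $(X+2Y)^2\geq 8XY$ (obtained from $X^2+4Y^2\geq 4XY$), reducing the desired conclusion to the bound $X+2Y\leq\tfrac{2n}{2k+1}$; using $X+2Y=n-\alpha(G)-\mu$, this in turn reduces to the middle-count estimate
\[
\mu\geq\frac{(2k-1)n}{2k+1}-\alpha(G).
\]

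When the middle interval $M$ has length at least $\tfrac{1}{2k-1}$, Lemma~\ref{lem:useful}\ref{it:u:3} yields $\mu\geq n-2\alpha(G)$, which is at least $\tfrac{(2k-1)n}{2k+1}-\alpha(G)$ precisely when $\alpha(G)\leq\tfrac{2n}{2k+1}$. For $k=2$ the hypothesis $\alpha(G)<\tfrac{kn}{2k+1}=\tfrac{2n}{2k+1}$ coincides with this condition and closes the argument cleanly. The main obstacle that I anticipate is for $k\geq 3$: the regime $\alpha(G)\in(\tfrac{2n}{2k+1},\tfrac{kn}{2k+1})$ is nontrivial, and similarly if $\ell>\tfrac{2k-3}{2k-1}$ then the middle interval becomes too short for Lemma~\ref{lem:useful}\ref{it:u:3}. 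In those sub-cases (where a direct single AM--GM estimate is insufficient) I expect that a case analysis or refinement is required---most naturally either a symmetric application of the asymmetric AM--GM at the right endpoint of the half-arc (leveraging a second maximum independent arc potentially ending at $z_\xi$), a sharper upper bound on $X$ via Lemma~\ref{lem:useful}\ref{it:u:2} applied to the short interval $L$, or a direct edge-count estimate that exploits the monotonicity of $|N(x)\cap R|$ as $x$ varies across $L$.
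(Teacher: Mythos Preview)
Your proposal has a genuine gap, and unfortunately the very case you declare to be clean is where it first breaks. The length of the middle interval is $|M|=\tfrac{2(k-1)}{2k-1}-\ell$, and you have already noted that $\ell>\tfrac{k-1}{2k-1}$; for $k=2$ this gives $|M|<\tfrac{1}{3}=\tfrac{1}{2k-1}$ \emph{always}, so Lemma~\ref{lem:useful}\ref{it:u:3} is never applicable to $M$ when $k=2$. (Incidentally, your upper bound on $\ell$ is weaker than what your own argument yields: applying part~\ref{it:u:3} to $(\xi+\tfrac{k-1}{2k-1},z_\xi]$ actually forces $\ell<\tfrac{k}{2k-1}$, not just $\ell\le\tfrac{2(k-1)}{2k-1}$. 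For $k\ge 3$ this sharper bound does guarantee $|M|>\tfrac{k-2}{2k-1}\ge\tfrac{1}{2k-1}$, so part~\ref{it:u:3} applies and gives $\mu\ge n-2\alpha(G)$; but then your second requirement $\alpha(G)\le\tfrac{2n}{2k+1}$ fails throughout the range $\alpha(G)\in(\tfrac{2n}{2k+1},\tfrac{kn}{2k+1})$, exactly as you anticipated.) So neither regime is covered, and the hedged remedies at the end are not a proof.

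More structurally, fixing a single $\xi$ at a maximum independent arc and bounding $e_G([\xi,z_\xi])\le XY$ by one AM--GM inequality does not seem to suffice: the choice of $\xi$ gives no control on how the $\alpha(G)$ vertices are distributed within $[\xi,\xi+\tfrac{k-1}{2k-1}]$, so $\mu$ can be small. The paper's proof is global rather than local. It argues by contradiction and uses the hypothesis $\alpha(G)<\tfrac{kn}{2k+1}$ through Lemma~\ref{lem:useful}\ref{it:u:2} (which you never invoke) to show that every neighbourhood arc $[x+\tfrac{k-1}{2k-1},x+\tfrac{k}{2k-1}]$ contains fewer than $\tfrac{2n}{2k+1}$ vertices; this yields a lower bound on $\sum_{x\in V}\lambda(\langle x,x+\tfrac{k-1}{2k-1}\rangle)$. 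The density assumption is then used to show that for \emph{every} $\xi$ the same sum, restricted to $V_\xi=V\cap[\xi,z_\xi-\tfrac{k-1}{2k-1})$, lies strictly below the corresponding average. Iterating $\xi\mapsto z_\xi-\tfrac{k-1}{2k-1}$ until the orbit closes up covers $V$ a whole number of times and gives the contradiction. The point is that $\alpha(G)<\tfrac{kn}{2k+1}$ is what makes the averaging work, via part~\ref{it:u:2}; your outline never really exploits this hypothesis.
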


\begin{proof}
Let $G$ be a blow-up of $F^k_d$ with $|V(G)|=n$ and $\alpha(G)< \frac{k}{2k+1}n$. 
Without loss of generality we may assume that $n$ is divisible by $2(2k+1)$.
This follows from the observation, that a graph~$G$ is $(\frac{1}{2},\frac{1}{2(2k+1)^2})$-dense 
if and only if the balanced blow-up of $G$ obtained by replacing each vertex by $2(2k+1)$ 
vertices is $(\frac{1}{2},\frac{1}{2(2k+1)^2})$-dense.

Suppose for the sake of contradiction that $G$ is $(\frac{1}{2},\frac{1}{2(2k+1)^2})$-dense.
From now on consider the geometric representation of $G$ given by Lemma~\ref{lemma:Fkd}.
Let $V$ be the set of points of the unit circle that are identified with the vertices of~$G$.
Recall that in~\eqref{eq:def:lambda} we defined~$\lambda(I)$ as the number of vertices 
contained in an interval $I\subseteq \RR/\ZZ$.
It will sometimes be convenient to count vertices on the boundary of an interval 
only with weight $1/2$. For that we write terms like 
$\lambda(\langle a,b\rangle)$, $\lambda(\langle a,b))$, where the 
brackets ``$\langle$'' or ``$\rangle$'' mark that the left or right end-point
of the respective interval is only counted $1/2$ if it is a vertex. 
Also recall that for $\xi\in \RR/\ZZ$ we defined $z_\xi\in V$ in~\eqref{eq:def:zxi}.
Since by our assumption $\alpha(G)<n/2$, we infer from part~\ref{it:u:1} of Lemma~\ref{lem:useful}
that
\begin{equation}\label{eq:zxi}
	z_\xi \in \big(\xi+\tfrac{k-1}{2k-1},\xi+\tfrac{k}{2k-1}\big)\,,
\end{equation}
which yields together with Lemma \ref{lem:useful}\ref{it:u:1} that
\begin{equation}\label{eq:zxi2}
	\sum_{x\in V\cap[\xi,z_\xi-\tfrac{k-1}{2k-1})}\big|N_G(x)\cap(x,z_\xi]\big|
	=
	e_G\bigl([\xi,z_\xi]\cap V\bigr)\,.
\end{equation}

Moreover, part~\ref{it:u:2} of Lemma~\ref{lem:useful} applied 
to intervals $[x+\frac{k-1}{2k-1},x+\frac{k}{2k-1}]$
combined with the assumption $\alpha(G)<\frac{k}{2k+1}n$ leads to 
\begin{multline*}
	\lambda \big(\langle x+\tfrac{k-1}{2k-1},x+\tfrac{k}{2k-1}\rangle \big)
	\leq
	\lambda \big([x+\tfrac{k-1}{2k-1},x+\tfrac{k}{2k-1}]\big)\\
	\leq
	(2k-3)\alpha(G)-(k-2)n
	<
	(2k-3)\frac{k}{2k+1}n-(k-2)n
	=
	\frac{2}{2k+1}n
\end{multline*}
for every vertex $x\in V$. Consequently,
\begin{align*}
	\sum_{x\in V}\Big (\lambda\big(\langle x-\tfrac{k-1}{2k-1},x\rangle \big)
	+\lambda\big(\langle x,x+\tfrac{k-1}{2k-1} \rangle\big)\Big)
	&\!=\!
	\sum_{x\in V}\Big(\lambda \big(\langle x,x+1 \rangle\big)
	-\lambda\big(\langle x+\tfrac{k-1}{2k-1},x+\tfrac{k}{2k-1}\rangle\big)\Big)\\
	&\!>
	n^2-\frac{2}{2k+1}n^2
	= 
	\frac{2k-1}{2k+1}n^2
\end{align*}
and  by symmetry we may assume that
\begin{equation}\label{eq:2a}
\sum_{x\in V}\lambda \big(\langle x,x+\tfrac{k-1}{2k-1}\rangle \big)>\frac{1}{2}\cdot\frac{2k-1}{2k+1}n^2 \, .
\end{equation}
In view of~\eqref{eq:2a} the following claim seems a bit surprising and, in fact, it will 
lead to the desired contradiction. For a simpler notation we set 
\begin{equation}\label{eq:def:Vxi}
	V_\xi=V\cap[\xi,z_\xi-\tfrac{k-1}{2k-1})
\end{equation}
for $\xi\in \RR/\ZZ$.

\begin{claim}\label{cl:1a}
For every $\xi\in \RR/\ZZ$ we have
\[
	\sum_{x\in V_\xi} 
	\Big(\lambda \big(\langle x,x+\tfrac{k-1}{2k-1}\rangle \big)-\frac{1}{2}\cdot\frac{2k-1}{2k+1}n\Big)
	< 0\,.
\] 
\end{claim}

\begin{proof}[Proof of Claim~\ref{cl:1a}]
Fix some $\xi\in\RR/\ZZ$. Since we assume that $G$ is $(\frac{1}{2},\frac{1}{2(2k+1)^2})$-dense, we have 
\[
	\sum_{x\in V_\xi} \lambda \big((x+\tfrac{k-1}{2k-1},z_\xi]\big)
	=
	\sum_{x\in V_\xi}\big|N_G(x)\cap(x,z_\xi]\big|
	\overset{\eqref{eq:zxi2}}{=}
	e_G\bigl([\xi,z_\xi]\cap V\bigr)
	>
	\frac{n^2}{2(2k+1)^2}\,.
\]
Therefore,
\begin{align}\label{eq:3a}
	\sum_{x\in V_\xi} \lambda \big(\langle x,x+\tfrac{k-1}{2k-1}]\big)
	&\overset{\phantom{\eqref{eq:def:zxi}}}{=}
	\sum_{x\in V_\xi}\Big(\lambda\big(\langle x,z_\xi]\big)
		-\lambda\big(( x+\tfrac{k-1}{2k-1},z_\xi]\big)\Big)\nonumber\\
	&\overset{\phantom{\eqref{eq:def:zxi}}}{<}
	\sum_{x\in V_\xi}\lambda \big(\langle x,z_\xi]\big)
		-\frac{n^2}{2(2k+1)^2}\nonumber\\
	&\overset{\phantom{\eqref{eq:def:zxi}}}{=}\sum_{x\in V_\xi}\Big(\lambda\big([\xi,z_\xi]\big) 
		- \lambda \big([\xi,x\rangle \big)\Big)-\frac{n^2}{2(2k+1)^2}\nonumber\\
	&\overset{\eqref{eq:def:zxi}}{=}|V_\xi|\cdot\frac{n}{2}-\frac{n^2}{2(2k+1)^2}-\sum_{x\in V_\xi}\lambda \big([\xi,x\rangle \big)\,.
\end{align}
We observe 
\begin{equation}\label{eq:4a}
 \sum_{x\in V_\xi} \lambda \big([\xi,x\rangle \big)
 =
 \sum_{i=1}^{|V_\xi|}(i-\tfrac 12)
 =
 \frac{|V_\xi|^2}{2}
\end{equation}
and combining \eqref{eq:3a} and \eqref{eq:4a} yields
\begin{align*}
	\sum_{x\in V_\xi} \Big(\lambda \big(\langle x,x+\tfrac{k-1}{2k-1}]\big)-\frac{1}{2}\cdot\frac{2k-1}{2k+1}n\Big)
	&<
	|V_\xi|\cdot \left(\frac{n}{2}-\frac{1}{2}\cdot\frac{2k-1}{2k+1}n\right) 
	 	- \frac{n^2}{2(2k+1)^2}- \frac{|V_\xi|^2}{2}\\
	&=
	-\frac{1}{2}\left(|V_\xi|-\frac{n}{2k+1}\right)^2\leq 0\,,
\end{align*}
which establishes the claim.
\end{proof}

Now set $V^*=\bigl\{\xi\in \RR/\ZZ \colon \xi+\tfrac{k-1}{2k-1}\in V\bigr\}$. 
Starting with an arbitrary $x(0)\in V^*$ we define recursively a sequence of 
members of $V^*$ by putting
\[
	 x(i+1) = z_{x(i)}-\tfrac{k-1}{2k-1}
\]	 
for every $i\in \NN$.
Since $V^*$ is finite, this sequence is eventually periodic and thus we could have chosen $x(0)$
such that $x(m)=x(0)$ holds for some $m\ge 2$. Let $w\in \NN$ denote the number of times we wind around 
the circle when reaching $x(m)$ from $x(0)$ by this construction.
By Claim \ref{cl:1a} we know that
\[
	\sum_{i=0}^{m-1}
	\sum_{x\in V_{x(i)}} 
	\Big(\lambda \big(\langle x,x+\tfrac{k-1}{2k-1}\rangle \big)-\frac{1}{2}\cdot\frac{2k-1}{2k+1}n\Big)
	< 0\,.
\]
On the other hand, \eqref{eq:2a} yields
\begin{align*}
	\sum_{i=0}^{m-1} \sum_{x\in V_{x(i)}}
		\Big(\lambda \big(\langle x,x+\tfrac{k-1}{2k-1}\rangle \big)-\frac{1}{2}\cdot\frac{2k-1}{2k+1}n\Big)
	&\overset{\eqref{eq:def:Vxi}}{=} 
	w\cdot\sum_{x\in V}
		\Big(\lambda \big(\langle x,x+\tfrac{k-1}{2k-1}\rangle \big)-\frac{1}{2}\cdot\frac{2k-1}{2k+1}n\Big) \\
	&\overset{\eqref{eq:2a}}{>}
	w\cdot\left(\frac{1}{2}\cdot\frac{2k-1}{2k+1}n^2-\frac{1}{2}\cdot\frac{2k-1}{2k+1}n^2\right)=0\, ,
\end{align*}
which is a contradiction and concludes the proof of Proposition~\ref{thm:Fd1}.
\end{proof}

It is left to consider the case when $G$ contains a large  independent set. 

\begin{proposition}\label{thm:general}
If $G$ is a blow-up of a generalised Andr\'asfai graph $F^k_d$ 
for some integers $k\geq 2$ and $d\geq 1$ with $|V(G)|=n$
and $\alpha(G)\geq \frac{k}{2k+1}n$, then $G$ is not $(\frac{1}{2},\frac{1}{2(2k+1)^2})$-dense.
\end{proposition}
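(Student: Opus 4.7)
The plan is to argue by contradiction: suppose $G$ is $(\tfrac12,\tfrac{1}{2(2k+1)^2})$-dense and exhibit a violating half-set. As in Proposition~\ref{thm:Fd1} I may assume $2(2k+1)\mid n$; and if $\alpha(G)\ge\lfloor n/2\rfloor$, then $\lfloor n/2\rfloor$ vertices inside a maximum independent set span no edges at all, immediately contradicting density. So from now on suppose $\tfrac{k}{2k+1}n\le\alpha(G)<\lfloor n/2\rfloor$.

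In the geometric representation from Lemma~\ref{lemma:Fkd} I would choose $\xi\in\RR/\ZZ$ with $\lambda([\xi,\xi+\tfrac{k-1}{2k-1}])=\alpha(G)$ and take as candidate half-set $S=[\xi,z_\xi]\cap V$. By Lemma~\ref{lem:useful}\ref{it:u:1} one has $z_\xi\in(\xi+\tfrac{k-1}{2k-1},\xi+\tfrac{k}{2k-1})$, so $S$ splits into three subarcs $V_\xi=V\cap[\xi,z_\xi-\tfrac{k-1}{2k-1})$, $M=V\cap[z_\xi-\tfrac{k-1}{2k-1},\xi+\tfrac{k-1}{2k-1}]$, $W=V\cap(\xi+\tfrac{k-1}{2k-1},z_\xi]$, of sizes $p$, $m$, $a$, with $p+m=\alpha(G)$ and $a=\lfloor n/2\rfloor-\alpha(G)\le\tfrac{n}{2(2k+1)}$. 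Both $V_\xi\cup M\subseteq I_\xi$ and $M\cup W\subseteq[z_\xi-\tfrac{k-1}{2k-1},z_\xi]$ lie in arcs of length at most $\tfrac{k-1}{2k-1}$ and are hence independent by Lemma~\ref{lem:useful}\ref{it:u:1}, so every edge of $S$ runs between $V_\xi$ and $W$ and the crude bound reads $e(S)\le p\cdot a$.

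The core of the proof is to upgrade this to $e(S)\le\tfrac{n^2}{2(2k+1)^2}$, which will then contradict density
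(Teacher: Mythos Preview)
Your setup is correct: reducing to $\tfrac{k}{2k+1}n\le\alpha(G)<n/2$, placing a maximum independent set in $[\xi,\xi+\tfrac{k-1}{2k-1}]$, and observing that every edge of $S=V\cap[\xi,z_\xi]$ joins $V_\xi$ to $W$ are all fine. The problem is that you stop precisely at the step you yourself call ``the core of the proof''. The crude bound $e(S)\le p\cdot a$ is far from sufficient: with $a=n/2-\alpha(G)$ and only $p\le\alpha(G)$ available, one gets at the borderline $\alpha(G)=\tfrac{k}{2k+1}n$ merely $p\cdot a\le\tfrac{k}{2(2k+1)^2}n^2$, which is off by a factor of~$k$. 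Even invoking Lemma~\ref{lem:useful}\ref{it:u:2} to bound $p$ (the interval $[\xi,z_\xi-\tfrac{k-1}{2k-1})$ has length $<\tfrac{1}{2k-1}$) yields only $p\le(2k-3)\alpha(G)-(k-2)n=\tfrac{2}{2k+1}n$ at the borderline, still leaving $p\cdot a\le\tfrac{n^2}{(2k+1)^2}$ --- twice too large. No combination of the purely structural parts~\ref{it:u:1}--\ref{it:u:3} of Lemma~\ref{lem:useful} will close this gap; you need to feed the density hypothesis back in.

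The paper does not try to bound $e(S)$ for this single half-set. Instead it uses the density hypothesis on many half-sets simultaneously. After locating the maximum independent set in $[0,\tfrac{k-1}{2k-1}]$ it also considers the symmetric half-set $[z',\tfrac{k-1}{2k-1}]$, shows $\lambda((z_0,z'))=\alpha(G)$, and then inscribes a regular $(2k-1)$-gon $b_0,\dots,b_{2k-2}$ with $[b_k,b_0]\subseteq[z_0,z']$ of length $\tfrac{k-1}{2k-1}$. Two applications of Lemma~\ref{lem:useful}\ref{it:u:5} (one from each end of the maximum independent set) give an \emph{upper} bound on $\lambda([b_1,b_{k-1}])$, and this already suffices when $k\in\{2,3\}$. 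For $k\ge 4$ one needs in addition $k-3$ applications of Lemma~\ref{lem:useful}\ref{it:u:6} at the points $b_2,\dots,b_{k-2}$, yielding a \emph{lower} bound on $\lambda([b_1,b_{k-1}])$ in terms of $\lambda((b_{k+1},b_{2k-2}))$; combined with the trivial estimate on the latter coming from~\ref{it:u:1}, the two bounds squeeze out $\alpha(G)<\tfrac{k}{2k+1}n$, a contradiction. The missing ingredient in your sketch is precisely this interplay between several half-sets encoded in parts~\ref{it:u:5} and~\ref{it:u:6} of Lemma~\ref{lem:useful}.
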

\begin{proof}
Similarly as in the proof of Proposition~\ref{thm:Fd1} we 
consider the geometric representation of an $n$-vertex graph $G$ 
that is a blow-up of a generalised Andr\'asfai graph 
$F^k_d$ and identify the vertex set of $G$ with some set $V\subseteq \RR/\ZZ$ so that~\eqref{eq:xoy} 
holds.
Again we may assume without loss of generality that $n$ is divisible by $2(2k+1)$ and
we suppose for a contradiction that $G$ is $(\tfrac{1}{2},\tfrac{1}{2(2k+1)^2})$-dense.
In particular, $\alpha(G)<n/2$ and the additional assumptions for parts~\ref{it:u:5} 
and~\ref{it:u:6} of Lemma~\ref{lem:useful} are satisfied.

Observe that every independent set of~$G$ is contained in some interval of $\RR/\ZZ$ of
length~$\tfrac{k-1}{2k-1}$. Therefore, without loss of generality we may assume that~$[0,\tfrac{k-1}{2k-1}]$ 
contains a maximum independent set, i.e.,
\[
	\lambda\big([0,\tfrac{k-1}{2k-1}]\big)=\alpha(G)\geq \frac{k}{2k+1}n\,.
\]

Recall that in~\eqref{eq:def:zxi} we defined a point $z_0$ with $\lambda\big([0, z_0]\big)=n/2$. 
Let the vertex $z'$ be defined similarly by $\lambda\bigl([z', \tfrac{k-1}{2k-1}]\bigr)=n/2$.
Then we have 
\begin{align*}
	\lambda\big((z_0, z'\,)\big) 
	&= 
	n-\lambda\big([z',0)\big)-\lambda\big([0,\tfrac{k-1}{2k-1}]\big)-\lambda\big((\tfrac{k-1}{2k-1},z_0]\big)\\
	&=
	n - (n/2-\alpha(G))-\alpha(G)-(n/2-\alpha(G))\\
	&=
	\alpha(G)
\end{align*}
and since $z_0$, $z'\in V$ the maximality of $\alpha(G)$
discloses that the interval $[z_0, z'\,]$ has at least the length $\tfrac{k-1}{2k-1}$.
Hence there is a closed subinterval $[b_k, b_0]$ of $[z_0, z'\,]$ whose length is exactly 
$\tfrac{k-1}{2k-1}$. We complete $b_0$ and $b_k$ to the vertices of a regular $(2k-1)$-gon, 
i.e., we consider the points $b_i\in \RR/\ZZ$ for $i\in\{0,\dots,2k-2\}$ 
such that the intervals $[b_i,b_{i+1}]$ have length $\tfrac{1}{2k-1}$ (see Figure~\ref{fig:proof}). 
Notice that $\alpha(G)<n/2$ entails
\begin{equation}\label{eq:gz0}
	z_0\in (b_{k-1},b_k]\,.
\end{equation}

\begin{figure}[ht]
\begin{tikzpicture}[scale=1.2]
	\draw[very thick,black!70!white] (0,0) circle (2.0);
	
	\draw[very thick, line width = 2pt, red!70!black] 
			(90:2) arc [start angle=90,radius=2, delta angle=-168];
	
\draw[thick, red!70!black] (90:1.9) -- (90:2.1);
	\draw[thick, red!70!black] (-78:1.9) -- (-78:2.1);
	\node at (90:2.28) {\small\textcolor{red!70!black}{$0$}};
	\node at (-78:2.35) {\small\textcolor{red!70!black}{$\frac{k-1}{2k-1}$}};

	\foreach \x in {0, 1, 2, 6, 7, 8, 9, 14}{
		\draw[thick] (101-\x*24:1.9) -- (101-\x*24:2.1);
	}
	\node at (103:2.3) {\small$b_{0}$};
	\node at (101-24:2.28) {\small$b_{1}$};
	\node at (101-48:2.28) {\small$b_{2}$};
	\node at (101-142:2.45) {\small$b_{k-2}$};
	\node at (101-163:2.4) {\small$b_{k-1}$};
	\node at (101-192:2.28) {\small$b_{k}$};
	\node at (101-216:2.28) {\small$b_{k+1}$};
	\node at (101+24:2.35) {\small$b_{2k-2}$};
	
	\draw[thick, green!50!black] (97:1.9) -- (97:2.1);
	\draw[thick, green!50!black] (-86:1.9) -- (-86:2.1);
	\node at (96:1.75) {\small\textcolor{green!50!black}{$z'$}};
	\node at (-86:1.73) {\small\textcolor{green!50!black}{$z_0$}};
	
\end{tikzpicture}
\caption{Largest independent set of $G$ is contained in the interval $[0,\frac{k-1}{2k-1}]$
	and the intervals $[0,z_0]$, $[z',\frac{k-1}{2k-1}]$ contain $n/2$ vertices each. 
	The $b_i$ form a regular $(2k-1)$-gon.}
\label{fig:proof}
\end{figure}
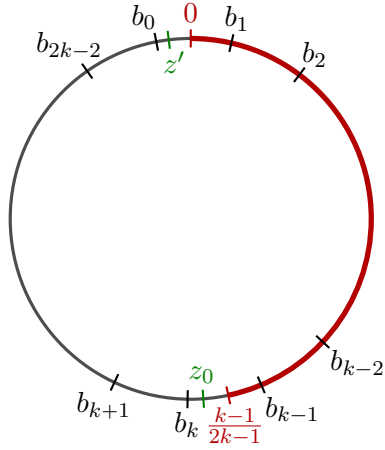

Below we apply Lemma~\ref{lem:useful}
to obtain several bounds on $\lambda\big([b_1,b_{k-1}]\big)$ and $\lambda\big([b_{k+1},b_{2k-2}]\big)$
that eventually lead to the desired contradiction. 
Applying Lemma~\ref{lem:useful}~\ref{it:u:5} with $\xi=0$ gives 
\[
	\lambda\big([0,b_1)\big)
	=
	\lambda\big([0,b_k-\tfrac{k-1}{2k-1})\big)
	\overset{\eqref{eq:gz0}}{\geq} 
	\lambda\big([0,z_0-\tfrac{k-1}{2k-1})\big)
	> 
	2\alpha(G)-\frac{2k-1}{2k+1}n
\]
and, by symmetry, we also have
\[
	\lambda\big((b_{k-1},\tfrac{k-1}{2k-1}]\big)
	> 
	2\alpha(G)-\frac{2k-1}{2k+1}n\,.
\]
Consequently, we arrive at
\begin{align}
	\lambda\big([b_{1},b_{k-1}]\big)
	&=
	\lambda\big([0,\tfrac{k-1}{2k-1}]\big)-\lambda\big([0,b_1)\big)-\lambda\big((b_{k-1},\tfrac{k-1}{2k-1}]\big)
	\nonumber\\
	&<
	\alpha(G)-2\left(2\alpha(G)-\frac{2k-1}{2k+1}n\right)
	=\frac{4k-2}{2k+1}n-3\alpha(G)\,.\label{eq:g1}
\end{align}
In particular, for the case $k=2$ this implies
\[
	0\leq \lambda\big([b_{1},b_{1}]\big)<\frac{6}{5}n-3\alpha(G)\,,
\]
which contradicts our assumption $\alpha(G)\geq2n/5$. Similarly, for $k=3$ 
inequality~\eqref{eq:g1} combined with Lemma~\ref{lem:useful}~\ref{it:u:3}
gives
\[
	n-2\alpha(G)
	\leq 
	\lambda\big([b_{1},b_{2}]\big)
	<
	\frac{10}{7}n-3\alpha(G)\,,
\]
which again contradicts the assumption $\alpha(G)\geq3n/7$ of this case.
Consequently, for the rest of the proof 
we can assume that $k\geq 4$.

Next we note that both intervals $(b_{k-1},b_{2k-2})$ and $(b_{k+1},b_{1})$ have length $\tfrac{k-1}{2k-1}$
and, hence, Lemma~\ref{lem:useful}~\ref{it:u:1} implies
\[
	\lambda\big((b_{k-1},b_{2k-2})\big)+\lambda\big((b_{k+1},b_{1})\big)\leq 2\alpha(G)
\]
and, therefore,
\begin{equation}\label{eq:g2}
	\lambda\big((b_{k+1},b_{2k-2})\big)
	\leq 
	2\alpha(G) - \lambda\big((b_{k-1},b_{1})\big)
	=
	2\alpha(G) - \Big(n - \lambda\big([b_{1},b_{k-1}]\big)\Big)\,.
\end{equation}

Finally, below we will verify 
\begin{equation}\label{eq:g3}
	\frac{4k-5}{2k+1}n-2\alpha(G)-\lambda\big([b_{1},b_{k-1}]\big)
	<
	\lambda\big((b_{k+1},b_{2k-2})\big)\,.
\end{equation}
Before we prove~\eqref{eq:g3}, we note that using~\eqref{eq:g2} as an upper bound for the right-hand side 
of~\eqref{eq:g3}
leads to
\[
	\frac{6k-4}{2k+1}n-4\alpha(G) 
	< 
	2\lambda\big([b_{1},b_{k-1}]\big)
	\overset{\eqref{eq:g1}}{<}
	\frac{8k-4}{2k+1}n-6\alpha(G)\,.
\]
This inequality contradicts the assumption $\alpha(G)\geq\frac{k}{2k+1}$ of the proposition and, hence, 
we conclude the proof by establishing~\eqref{eq:g3}.

For the proof of inequality~\eqref{eq:g3} we appeal to Lemma~\ref{lem:useful}~\ref{it:u:6} with 
$\xi=b_i$ for every $i=2,\dots,k-2$. We set
\[
	I_i=(b_i-\tfrac{1}{2k-1},b_i+\tfrac{1}{2k-1})=(b_{i-1},b_{i+1})
\]
and then in view of
\[
	(b_i+\tfrac{k-1}{2k-1},b_i-\tfrac{k-1}{2k-1})=(b_{i+k-1},b_{i+k})
\] 
part~\ref{it:u:6} translates to
\begin{equation}\label{eq:Ii}
	\lambda(I_i)
	> 
	\frac{4}{2k+1}n-2\lambda\big((b_{i+k-1},b_{i+k})\big)\,.
\end{equation}
Furthermore, we note that 
for every $i\in\{2,\dots,k-2\}$ we have 
$
	I_i\subseteq [b_1,b_{k-1}]
$
and each of the two families
\[
	\cI_{0}=\{I_i\colon \text{$i$ even and $2\leq i\leq k-2$} \}
	\qand
	\cI_{1}=\{I_i\colon \text{$i$ odd and $2\leq i\leq k-2$}\}
\]
consists of mutually disjoint intervals. Moreover, we can add the interval~$[b_1,b_2)$ to $\cI_{1}$ 
and~$(b_{k-2},b_{k-1}]$ either to $\cI_{1}$
(when $k$ is even) or to $\cI_{0}$ (when $k$ is odd) and still 
each family consists of mutually disjoint intervals all contained in $[b_1,b_{k-1}]$.
As a result we get 
\[
	2\lambda\big([b_1,b_{k-1}]\big)
	\geq
	\lambda\big([b_1,b_2)\big)+\sum_{i=2}^{k-2}\lambda(I_i)+\lambda\big((b_{k-2},b_{k-1}]\big)
	\,.
\]
Moreover, using the estimate from Lemma~\ref{lem:useful}~\ref{it:u:3} for $\lambda\big([b_1,b_2)\big)$
and $\lambda\big((b_{k-2},b_{k-1}]\big)$ and~\eqref{eq:Ii} for every term in the middle sum,
we arrive at
\begin{align*}
	2\lambda\big([b_1,b_{k-1}]\big)
	&> 
	\big(n-2\alpha(G)\big)
		+\sum_{i=2}^{k-2}\left(\frac{4n}{2k+1}-2\lambda\big((b_{i+k-1},b_{i+k})\big)\right)
		+\big(n-2\alpha(G)\big)\\
	&\geq
	2n-4\alpha(G)+(k-3)\cdot\frac{4n}{2k+1}-2\lambda\big((b_{k+1},b_{2k-2})\big)\\
	&=
	\frac{8k-10}{2k+1}n-4\alpha(G)-2\lambda\big((b_{k+1},b_{2k-2})\big)\,.
\end{align*}
Rearranging the last inequality gives~\eqref{eq:g3} and this concludes the proof.
\end{proof}

\begin{bibdiv}
\begin{biblist}

\bib{A62}{article}{
   author={Andr\'asfai, B.},
   title={\"Uber ein Extremalproblem der Graphentheorie},
   language={German},
   journal={Acta Math. Acad. Sci. Hungar.},
   volume={13},
   date={1962},
   pages={443--455},
   issn={0001-5954},
   review={\MR{0145503}},
}

\bib{A64}{article}{
   author={Andr\'asfai, B.},
   title={Graphentheoretische Extremalprobleme},
   language={German},
   journal={Acta Math. Acad. Sci. Hungar},
   volume={15},
   date={1964},
   pages={413--438},
   issn={0001-5954},
   review={\MR{0169227}},
}

\bib{AnErSo74}{article}{
   author={Andr{\'a}sfai, B.},
   author={Erd{\H{o}}s, P.},
   author={S{\'o}s, V. T.},
   title={On the connection between chromatic number, maximal clique and
   minimal degree of a graph},
   journal={Discrete Math.},
   volume={8},
   date={1974},
   pages={205--218},
   issn={0012-365X},
   review={\MR{0340075}},
}

\bib{ChJiKo97}{article}{
   author={Chen, C. C.},
   author={Jin, G. P.},
   author={Koh, K. M.},
   title={Triangle-free graphs with large degree},
   journal={Combin. Probab. Comput.},
   volume={6},
   date={1997},
   number={4},
   pages={381--396},
   issn={0963-5483},
   review={\MR{1483425}},
   doi={10.1017/S0963548397003167},
}

\bib{Er57}{article}{
   author={Erd\H os, P.},
   title={Remarks on a theorem of {R}amsay},
   journal={Bull. Res. Council Israel. Sect. F},
   volume={7F},
   date={1957/1958},
   pages={21--24},
   review={\MR{0104594}},
}

\bib{Er76a}{article}{
   author={Erd{\H{o}}s, P.},
   title={Problems and results in graph theory and combinatorial analysis},
   conference={
      title={Proceedings of the Fifth British Combinatorial Conference},
      address={Univ. Aberdeen, Aberdeen},
      date={1975},
   },
   book={
      publisher={Utilitas Math., Winnipeg, Man.},
   },
   date={1976},
   pages={169--192. Congressus Numerantium, No. XV},
   review={\MR{0409246}},
}

\bib{Er97a}{article}{
   author={Erd{\H{o}}s, Paul},
   title={Some old and new problems in various branches of combinatorics},
   note={Graphs and combinatorics (Marseille, 1995)},
   journal={Discrete Math.},
   volume={165/166},
   date={1997},
   pages={227--231},
   issn={0012-365X},
   review={\MR{1439273}},
   doi={10.1016/S0012-365X(96)00173-2},
}

\bib{ErFaRoSc94}{article}{
   author={Erd{\H{o}}s, P.},
   author={Faudree, R. J.},
   author={Rousseau, C. C.},
   author={Schelp, R. H.},
   title={A local density condition for triangles},
   note={Graph theory and applications (Hakone, 1990)},
   journal={Discrete Math.},
   volume={127},
   date={1994},
   number={1-3},
   pages={153--161},
   issn={0012-365X},
   review={\MR{1273598}},
   doi={10.1016/0012-365X(92)00474-6},
}

\bib{Ha82}{article}{
   author={H{\"a}ggkvist, Roland},
   title={Odd cycles of specified length in nonbipartite graphs},
   conference={
      title={Graph theory},
      address={Cambridge},
      date={1981},
   },
   book={
      series={North-Holland Math. Stud.},
      volume={62},
      publisher={North-Holland, Amsterdam-New York},
   },
   date={1982},
   pages={89--99},
   review={\MR{671908}},
}

\bib{Ji95}{article}{
   author={Jin, Guo Ping},
   title={Triangle-free four-chromatic graphs},
   journal={Discrete Math.},
   volume={145},
   date={1995},
   number={1-3},
   pages={151--170},
   issn={0012-365X},
   review={\MR{1356592}},
   doi={10.1016/0012-365X(94)00063-O},
}

\bib{KeSu06}{article}{
   author={Keevash, Peter},
   author={Sudakov, Benny},
   title={Sparse halves in triangle-free graphs},
   journal={J. Combin. Theory Ser. B},
   volume={96},
   date={2006},
   number={4},
   pages={614--620},
   issn={0095-8956},
   review={\MR{2232396}},
   doi={10.1016/j.jctb.2005.11.003},
}

\bib{Kr95}{article}{
   author={Krivelevich, Michael},
   title={On the edge distribution in triangle-free graphs},
   journal={J. Combin. Theory Ser. B},
   volume={63},
   date={1995},
   number={2},
   pages={245--260},
   issn={0095-8956},
   review={\MR{1320169}},
   doi={10.1006/jctb.1995.1018},
}

\bib{LS}{article}{
   author={Letzter, S.},
   author={Snyder, R.},   
  title={The homomorphism threshold of $\{C_3,C_5\}$-free graphs},
   eprint={1610.04932},
   note={Submitted.}
}

\bib{NoYe15}{article}{
   author={Norin, Sergey},
   author={Yepremyan, Liana},
   title={Sparse halves in dense triangle-free graphs},
   journal={J. Combin. Theory Ser. B},
   volume={115},
   date={2015},
   pages={1--25},
   issn={0095-8956},
   review={\MR{3383248}},
   doi={10.1016/j.jctb.2015.04.006},
}

\bib{W73}{article}{
   author={Woodall, D. R.},
   title={The binding number of a graph and its Anderson number},
   journal={J. Combinatorial Theory Ser. B},
   volume={15},
   date={1973},
   pages={225--255},
   review={\MR{0327573}},
}

\end{biblist}
\end{bibdiv}

\end{document}